\theoremstyle{plain}
 \newtheorem{theorem}{Theorem}[section]
 \newtheorem{prop}{Proposition}[section]
 \newtheorem{lem}{Lemma}[section]
\theoremstyle{Definition}
 \newtheorem{exm}{Example}[section]
 \newtheorem{dfn}{Definition}[section]
\theoremstyle{remark}
 \numberwithin{equation}{section}
\renewcommand{\leq}{\leqslant}
\renewcommand{\geq}{\geqslant}
\title[Invariant Properties of Fatou Set, Julia Set and Escaping Set ...]{Invariant Properties of Fatou Set, Julia Set and Escaping Set of Holomorphic Semigroup}
\subjclass[2010]{37F10, 30D05}
\keywords{Transcendental semigroup, Fatou set, Julia set, escaping set, S-completely invariant set.}
\author[B. H. Subedi]{\bfseries  Bishnu Hari Subedi}
\address{ 
Central Department of Mathematics \\ 
Institute of Science and Technology   \\ 
Tribhuvan University   \\ 
Kirtipur, Kathmandu\\
Nepal}
\email{subedi.abs@gmail.com / subedi\_bh@cdmathtu.edu.np }
\author[A. Singh]{Ajaya Singh}
\address{Central Department of Mathematics, Institute of Science and Technology, Tribhuvan University, Kirtipur, Kathmandu, Nepal }
\email{singh.ajaya1@gmail.com / singh\_a@cdmathtu.edu.np} 
\thanks{This research work of the first author is supported by PhD faculty fellowship from University Grants Commission, Nepal. } 
\begin{document}

{\begin{flushleft}\baselineskip9pt\scriptsize
\end{flushleft}}
\vspace{18mm} \setcounter{page}{1} \thispagestyle{empty}

\begin{abstract}
In this paper, we prove that escaping set of transcendental semigroup is S-forward invariant. We also prove that if holomorphic semigroup is abelian, then Fatou set, Julia set and escaping set are S-completely invariant. We see certain cases and conditions that the holomorphic semigroup dynamics exhibits same dynamical behavior just like the classical complex dynamics. Frequently, we also examine certain amount of connection and contrast between classical complex dynamics and holomorphic semigroup dynamics.  
\end{abstract}

\maketitle

\section{Introduction}
It is quite natural to extend the Fatou-Julia-Eremenko  theory of the iteration of single holomorphic map in complex plane $ \mathbb{C} $ or extended complex plane $ \mathbb{C}_{\infty} $ to composite of the family of holomorphic maps. Let $ \mathscr{F} $ be a space of holomorphic maps on $ \mathbb{C} $ or $ \mathbb{C}_{\infty} $. For any map $ \phi \in \mathscr{F} $,  $ \mathbb{C} $ or $ \mathbb{C}_{\infty} $ is naturally partitioned  into two subsets: the set of normality and its complement. We say that a family $ \mathscr{F} $ is normal if each sequence from the family has a subsequence which either converges uniformly on compact subsets of $ \mathbb{C} $ or $ \mathbb{C}_{\infty} $ or diverges uniformly to $ \infty $. The set of normality or Fatou set $ F(\phi) $ of the map $ \phi \in \mathscr{F} $ is the largest open set on which the iterates $ \phi^{n} = \phi \circ \phi \circ \ldots \circ \phi$ (n-fold composition of $ \phi $ with itself) is a normal family.  The complement $ J(\phi) $ is the Julia set. A maximally connected subset of Fatou set $ F(f) $ is a Fatou component.  
 
 Semigroup $ S $ is a very classical algebraic structure with binary composition that satisfies associative law. It naturally arose from the general mapping of a set into itself. So a set of holomorphic maps on $ \mathbb{C} $ or $ \mathbb{C}_{\infty} $ naturally forms a semigroup. Here, we take a set $ A $ of holomorphic maps and construct a semigroup $ S $ consisting of all elements that can be expressed as a finite composition of elements in $ A $. We say such a semigroup $ S $ by holomorphic semigroup generated by the set $ A $.  
For our simplicity, we denote the class of all rational maps of $ \mathbb{C_{\infty}} $ by $ \mathscr{R} $ and class of all transcendental entire maps of $ \mathbb{C} $ by $ \mathscr{E} $. 
Our particular interest is to study of dynamics of the families of above two classes of  holomorphic maps.   For a collection $\mathscr{F} = \{f_{\alpha}\}_{\alpha \in \Delta} $ of such maps, let 
$$
S =\langle f_{\alpha} \rangle
$$ 
be a \textit{holomorphic semigroup} generated by them. Here $ \mathscr{F} $ is either a collection $ \mathscr{R} $ of rational maps or a collection $ \mathscr{E} $ of transcendental entire maps.   $ \Delta $ represents an index set to which $ \alpha $  belongs is finite or infinite.  
Here, each $f \in S$ is a holomorphic function and $S$ is closed under functional composition. Thus $f \in S$ is constructed through the composition of finite number of functions $f_{\alpha_k},\;  (k=1, 2, 3,\ldots, m) $. That is, $f =f_{\alpha_1}\circ f_{\alpha_2}\circ f_{\alpha_3}\circ \cdots\circ f_{\alpha_m}$. In particular,  if $ f_{\alpha} \in \mathscr{R} $, we say $ S =\langle f_{\alpha} \rangle$ a \textit{rational semigroup} and if  $ f_{\alpha} \in \mathscr{E} $, we say $ S =\langle f_{\alpha} \rangle$ a \textit{transcendental semigroup}. 

A semigroup generated by finitely many holomorphic functions $f_{i}, (i = 1, 2, \ldots, \\ n) $  is called \textit{finitely generated  holomorphic semigroup}. We write $S= \langle f_{1},f_{2},\ldots,f_{n} \rangle$.
 If $S$ is generated by only one holomorphic function $f$, then $S$ is \textit{cyclic semigroup}. We write $S = \langle f\rangle$. In this case, each $g \in S$ can be written as $g = f^n$, where $f^n$ is the nth iterates of $f$ with itself. Note that in our study of  semigroup dynamics, we say $S = \langle f\rangle$  a \textit{trivial semigroup}. 
  
The following result will be clear from the definition of holomorphic semigroup. It shows that every element of holomorphic semigroup can be written as finite composition of the sequence of $f_{\alpha} $
\begin{prop}\label{ts1}
 Let $S   = \langle f_{\alpha} \rangle$ be an arbitrary  holomorphic semigroup. Then for every $ f \in S $,  $f^{m} $(for all $ m \in \mathbb{N}$) can be written as $f^{m} =f_{\alpha_1}\circ f_{\alpha_2}\circ f_{\alpha_3}\circ \cdots\circ f_{\alpha_p}$ for some $ p\in \mathbb{N} $.
 \end{prop}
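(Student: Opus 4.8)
The plan is to derive the statement directly from the defining property of the semigroup generated by a family. By construction, $S = \langle f_{\alpha} \rangle$ consists of exactly those maps obtainable as a finite composition of the generators, so the content of the proposition is that this description is stable under taking powers. First I would fix an arbitrary $f \in S$ and record, from the definition of $S$, that there exist finitely many indices $\alpha_1, \ldots, \alpha_k$ (with each $\alpha_i \in \Delta$) such that $f = f_{\alpha_1} \circ f_{\alpha_2} \circ \cdots \circ f_{\alpha_k}$.

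Next I would argue by induction on $m$. For the base case $m = 1$ there is nothing to prove, since $f^{1} = f$ is already exhibited as a composition of $k$ generators, so one may take $p = k$. For the inductive step, assuming $f^{m} = f_{\beta_1} \circ \cdots \circ f_{\beta_q}$ is a finite composition of generators, I would write $f^{m+1} = f^{m} \circ f$ and substitute each factor by its expression as a word in the generators, obtaining
$$f^{m+1} = f_{\beta_1} \circ \cdots \circ f_{\beta_q} \circ f_{\alpha_1} \circ \cdots \circ f_{\alpha_k},$$
which is again a finite composition of generators, now of length $p = q + k$. Associativity of functional composition is what lets me drop the parentheses and treat the concatenation as a single word, so the induction closes.

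Alternatively, I could bypass the induction entirely: since $S$ is closed under composition, $f^{m} \in S$ for every $m \in \mathbb{N}$, and every element of $S$ is by definition a finite composition of generators; unwinding $f^{m}$ as the $m$-fold composition of the $k$-term word for $f$ then gives $p = mk$ explicitly.

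I do not expect any genuine obstacle here, as the assertion is essentially a reformulation of the definition of $\langle f_{\alpha} \rangle$ together with closure of $S$ under composition. The only point meriting care is the bookkeeping of indices, namely checking that the generators appearing in the expression for $f^{m}$ are indeed drawn from the original family $\{f_{\alpha}\}_{\alpha \in \Delta}$ and that their number $p$ remains finite, which is immediate because $k$ is finite and $m$ is a fixed natural number.
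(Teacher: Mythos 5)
Your argument is correct and matches the paper's intent: the paper offers no written proof, stating only that the result ``will be clear from the definition of holomorphic semigroup,'' and your induction (or the direct observation that $f^{m}$ is the concatenation of $m$ copies of the word for $f$, giving $p = mk$) is exactly the routine verification being left implicit. Nothing further is needed.
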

Let  $ f $ be a holomorphic map. We say that  $ f $ \textit{iteratively divergent} at $ z \in \mathbb{C} $ if $  f^n(z)\rightarrow \alpha \; \textrm{as} \; n \rightarrow \infty$, where $ \alpha $  is an essential singularity of $ f $. A sequence $ (f_{k})_{k \in \mathbb{N}} $ of holomorphic maps is said to be \textit{iteratively divergent} at $ z $ if $ f_{k}^{n}(z) \to\alpha_{k} \;\ \text{as}\;\ n\to \infty$ for all $ k \in \mathbb{N} $, where $ \alpha_{k} $  is an essential singularity of $ f_{k} $ for each $ k $.  Semigroup $ S $ is \textit{iteratively divergent} at $ z $ if $f^n(z)\rightarrow \alpha_{f} \; \textrm{as} \; n \rightarrow \infty$, where $ \alpha_{f} $  is an essential singularity of each $ f \in S $. Otherwise, a function $ f  $, sequence $ (f_{k})_{k \in \mathbb{N}} $ and semigroup $ S $  are said to be iteratively bounded at $ z $.

Based on the definition of classical complex dynamics (that is,  on the Fatou-Julia-Eremenko theory of a complex analytic function), the Fatou set, Julia set and escaping set in the settings of holomorphic semigroup are defined as follows.
\begin{dfn}[\textbf{Fatou set, Julia set and escaping set}]\label{2ab} 
\textit{Fatou set} of the semigroup $S$ is defined by
  \[F (S) = \{z \in \mathbb{C}: S\;\ \textrm{is normal in a neighborhood of}\;\ z\}\] 
and the \textit{Julia set} $J(S) $ of $S$ is the compliment of $ F(S) $. If $ S $ is a transcendental semigroup, the \textit{escaping set} of $S$ is defined by 
$$
I(S)  = \{z \in \mathbb{C}: S \;  \text{is iteratively divergent at} \;z \}
$$
We call each point of the set $  I(S) $ by \textit{escaping point}.  Any maximally connected subset $ U $ of the Fatou set $ F(S) $ is called \textit{Fatou component}.       
\end{dfn} 
It is obvious that $F(S)$ is the largest open subset of $\mathbb{C}$ on which the family $\mathscr{F} $ in $S$ (or semigroup $ S $ itself) is normal. Hence its compliment $J(S)$ is a smallest closed set for any  semigroup $S$. Whereas the escaping set $ I(S) $ is neither an open nor a closed set (if it is non-empty) for any semigroup $S$.  
        
If $S = \langle f\rangle$, then $F(S), J(S)$ and $I(S)$ are respectively the Fatou set, Julia set and escaping set in classical complex dynamics. In this situation we simply write: $F(f), J(f)$ and $I(f)$. 

The fundamental contrast between classical complex dynamics and semigroup dynamics appears by different algebraic structure of corresponding semigroups. In fact, non-trivial semigroup (rational or transcendental) need not be, and most often will not be abelian. However, trivial semigroup is cyclic and therefore abelian. As we discussed before, classical complex dynamics is a dynamical study of trivial (cyclic)  semigroup whereas semigroup dynamics is a dynamical study of non-trivial semigroup. 

The following characterization of escaping set will be clear from the definition \ref{2ab} of escaping set and proposition \ref{ts1}, which can be an alternative definition.

\begin{theorem}\label{ad1}
If a complex number $ z \in \mathbb{C} $ is escaping point of any transcendental semigroup $ S $, then  every sequence in $ S $ has a subsequence which diverges to $ \infty $ at $ z $.
\end{theorem}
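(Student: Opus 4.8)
The plan is to reduce the statement to the single-map escaping property and then lift it to arbitrary sequences with the help of Proposition \ref{ts1}. First I would unpack the hypothesis: saying that $z$ is an escaping point of $S$ means, by Definition \ref{2ab}, that $S$ is iteratively divergent at $z$; and since every $f \in S$ is a transcendental entire map whose only essential singularity is at $\infty$, this is exactly the assertion that $f^{n}(z) \to \infty$ as $n \to \infty$ for every $f \in S$. In particular $z$ lies in the classical escaping set $I(f)$ of each individual element $f \in S$, so that $I(S) = \bigcap_{f \in S} I(f)$. This reformulation is what I would keep in front of me throughout.

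Next, given an arbitrary sequence $(g_{k})_{k \in \mathbb{N}}$ in $S$, I would use Proposition \ref{ts1} to express each $g_{k}$, together with the iterates that will appear, as a finite composition $f_{\alpha_{1}} \circ \cdots \circ f_{\alpha_{p}}$ of the generators. The aim is then to pass to a subsequence $(g_{k_{j}})$ along which the number of composed generators grows without bound, and to show that feeding $z$ through longer and longer generator-compositions forces $g_{k_{j}}(z) \to \infty$. The driving mechanism is the one already guaranteed by the hypothesis: each generator, together with the forward images it produces, pushes the orbit of $z$ towards the essential singularity, so accumulating more and more compositions cannot leave the orbit bounded.

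The step I expect to be the main obstacle is precisely this last implication, because the hypothesis only directly controls the iterates $f^{n}$ of a \emph{single} map, whereas a typical element of $S$ is a composition of distinct generators; transferring divergence from one-generator iterates to mixed compositions is the delicate heart of the argument, and it is here that Proposition \ref{ts1} must be exploited most carefully. A related point that has to be addressed for the conclusion to be correct is that one genuinely needs a subsequence along which the compositions lengthen: a sequence that merely repeats a fixed element $g$ returns the constant finite value $g(z)$, so the extraction of a subsequence of unbounded composition-length — made possible by the infinite, non-cyclic structure of a transcendental semigroup — is indispensable. Once such a subsequence is isolated, the divergence $g_{k_{j}}(z) \to \infty$ follows by the same reasoning that yields $f^{n}(z) \to \infty$, and the proof is complete.
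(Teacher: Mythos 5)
Your proposal never actually carries out the step you yourself single out as ``the delicate heart of the argument''. The hypothesis $z \in I(S)$ gives you exactly this: for each \emph{fixed} $f \in S$ the iterates satisfy $f^{n}(z) \to \infty$. It gives no control over the values at $z$ of long \emph{mixed} words in the generators. Concretely, take $S = \langle f, g \rangle$ and the sequence $h_{k} = g \circ f^{k}$: the composition length tends to infinity and $f^{k}(z) \to \infty$, but $h_{k}(z) = g(f^{k}(z))$ need not tend to $\infty$, because $g$ is transcendental entire and has an essential singularity at $\infty$; by Casorati--Weierstrass the values $g(w_{k})$ along a sequence $w_{k} \to \infty$ can accumulate anywhere in $\mathbb{C}$. (The paper's own example $f(z) = e^{z}$, $g(z) = e^{-z}$, where $g \circ f^{k}$ is iteratively bounded at every point of $I(f)$, is precisely this phenomenon.) So your closing assertion that ``the divergence $g_{k_{j}}(z) \to \infty$ follows by the same reasoning that yields $f^{n}(z) \to \infty$'' is not a proof: the implication ``unbounded composition length at an escaping point forces divergence'' is not available from the hypothesis, and Proposition \ref{ts1} only says that iterates are particular words in the generators --- it does not let you argue in the opposite direction, from arbitrary words back to iterates of a single element.

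There is a second problem that you correctly notice and then wave away: a constant sequence $g_{k} = g$ has \emph{every} subsequence equal to the constant finite value $g(z)$, and no appeal to ``the infinite, non-cyclic structure'' of $S$ produces a subsequence of growing composition length inside it. Since the theorem quantifies over every sequence in $S$, either ``diverges to $\infty$ at $z$'' must be read differently (e.g.\ as iterative divergence of each term, which reduces the claim to a restatement of Definition \ref{2ab}) or the class of admissible sequences must be restricted; your outline does neither. For comparison, the paper supplies no written proof at all --- it declares the theorem clear from Definition \ref{2ab} and Proposition \ref{ts1} --- so the burden of supplying the missing mechanism is real, and the proposal does not discharge it.
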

The following immediate relations hold for any $ f \in S $  from the definition \ref{2ab}. Indeed, it shows certain connection between classical complex dynamics and semigroup dynamics.

\begin{theorem}\label{1c}
Let $ S $  be a semigroup. Then
\begin{enumerate}  
 \item $F(S) \subset F(f)$ for all $f \in S$  and hence  $F(S)\subset \bigcap_{f\in S}F(f)$. 
\item $ J(f) \subset J(S) $ for all $f \in S$.
 \item  $I(S) \subset I(f)$ for all $f \in S$  and hence  $I(S)\subset \bigcap_{f\in S}I(f)$ in the case of transcendental semigroup $ S $.
\end{enumerate}
\end{theorem}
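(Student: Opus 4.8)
The plan is to establish part (1) directly from the definition of normality, to deduce part (2) by passing to complements, and to obtain part (3) straight from the definition of the escaping set. The whole statement rests on a single structural fact: because $S$ is closed under functional composition, every iterate $f^{n}$ of a fixed $f \in S$ again lies in $S$ (as also reflected in Proposition \ref{ts1}). Consequently the iterate sequence $(f^{n})_{n \in \mathbb{N}}$ is a sequence drawn from the family $S$, and more generally $\{f^{n} : n \in \mathbb{N}\} \subseteq S$.

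For part (1), I would fix $f \in S$ and take an arbitrary $z \in F(S)$. By Definition \ref{2ab} there is a neighborhood $U$ of $z$ on which the family $S$ is normal, i.e. every sequence of members of $S$ has a subsequence that converges locally uniformly on $U$ (to a holomorphic limit or to $\infty$). Since $\{f^{n} : n \in \mathbb{N}\} \subseteq S$, any sequence extracted from the subfamily $\{f^{n}\}$ is in particular a sequence in $S$; thus a subfamily of a normal family is itself normal. Hence $\{f^{n}\}$ is normal on $U$, which is exactly the statement $z \in F(f)$. As $z \in F(S)$ and $f \in S$ were arbitrary, $F(S) \subseteq F(f)$ for every $f \in S$, and intersecting over all such $f$ gives $F(S) \subseteq \bigcap_{f \in S} F(f)$.

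Part (2) is then immediate by complementation: working in $\mathbb{C}$ (or $\mathbb{C}_{\infty}$ in the rational case), we have $J(S) = \mathbb{C} \setminus F(S)$ and $J(f) = \mathbb{C} \setminus F(f)$, so the inclusion $F(S) \subseteq F(f)$ from part (1) reverses to $J(f) \subseteq J(S)$ for every $f \in S$. For part (3), I would let $S$ be transcendental and take $z \in I(S)$. By the definition of the escaping set, $S$ is iteratively divergent at $z$, meaning $f^{n}(z) \to \alpha_{f}$ as $n \to \infty$ for each $f \in S$, where $\alpha_{f}$ is the essential singularity of $f$; for a transcendental entire map this singularity is $\infty$, so $f^{n}(z) \to \infty$. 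This is precisely the condition $z \in I(f)$. Hence $I(S) \subseteq I(f)$ for every $f \in S$, and intersecting yields $I(S) \subseteq \bigcap_{f \in S} I(f)$.

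The argument is essentially bookkeeping, so I do not expect a deep obstacle; the only point requiring genuine care is to keep the distinction clear between the normality of the \emph{whole} family $S$ and the normality of the single-generator iterate sequence $(f^{n})$, and to justify cleanly that a subfamily of a normal family is normal. It is also worth recording that these inclusions are typically strict for non-trivial semigroups, so no reverse containment is being asserted.
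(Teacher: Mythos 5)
Your proposal is correct and follows exactly the route the paper intends: the paper states Theorem \ref{1c} as an immediate consequence of Definition \ref{2ab} (together with the closure of $S$ under composition, so that $\{f^{n}:n\in\mathbb{N}\}\subseteq S$) and offers no further proof, and your write-up simply makes that definitional argument explicit. The three steps you give --- a subfamily of a normal family is normal, complementation for the Julia sets, and reading off the escaping-set condition for a single $f\in S$ from iterative divergence of all of $S$ --- are all sound.
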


Hinkkanen and Martin proved the following results ({\cite[Lemma 3.1 and Corollary 3.1]{hin}}).
\begin{theorem}\label{perf}
 Let $ S $ be a rational semigroup. Then
 Julia set $ J(S) $ is perfect and $ J(S) = \overline{\bigcup_{f \in S} J(f)} $ 
 \end{theorem}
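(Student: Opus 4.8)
The plan is to prove the two assertions in turn, deducing perfectness from the identity $J(S)=\overline{\bigcup_{f\in S}J(f)}$ at the end. One inclusion of the identity is essentially formal: by Theorem \ref{1c} we have $J(f)\subset J(S)$ for every $f\in S$, hence $\bigcup_{f\in S}J(f)\subset J(S)$, and since $J(S)$ is closed (being the complement of the open set $F(S)$) we may pass to closures to obtain $\overline{\bigcup_{f\in S}J(f)}\subset J(S)$. So the whole content of the equality lies in the reverse inclusion $J(S)\subset\overline{\bigcup_{f\in S}J(f)}$.

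For the reverse inclusion I would argue on the complement. Write $N=\overline{\bigcup_{f\in S}J(f)}$ and suppose $z_0\notin N$. Then there is an open neighbourhood $U$ of $z_0$ with $U\cap J(f)=\emptyset$, that is $U\subset F(f)$, for \emph{every} $f\in S$; the goal is to show that the family $S$ itself is normal near $z_0$, so that $z_0\in F(S)$ and hence $z_0\notin J(S)$. Here I would fix one element $g_0\in S$ with $\deg g_0\ge 2$ (such an element exists for any genuine rational semigroup; the case in which every element is a M\"obius map must be handled separately). Then $J(g_0)$ is an infinite, perfect, completely invariant subset of $\mathbb{C}_\infty$, and the decisive step is the claim
\begin{equation*}
f(U)\cap J(g_0)=\emptyset\qquad\text{for every } f\in S.
\end{equation*}
Granting the claim, every map in $S$ omits on $U$ the infinitely many values of $J(g_0)$, in particular three fixed values, so Montel's theorem shows that $\{f:f\in S\}$ is normal on $U$; thus $z_0\in F(S)$, the desired contradiction.

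The main obstacle is precisely the verification of the displayed claim, and it is here that the semigroup structure must be used in an essential way. The mechanism I would exploit is this: if some $f\in S$ satisfied $f(z_1)=w\in J(g_0)$ with $z_1\in U$, then, because $\{g_0^n\}$ fails to be normal at the Julia point $w$, pulling this failure back through a local holomorphic inverse branch of $f$ at a non-critical point forces the subfamily $\{g_0^n\circ f\}_n\subset S$ to be non-normal near $z_1$, so that $z_1\in J(S)$. To convert this into a contradiction with $U\subset\bigcap_{f\in S}F(f)$ one needs to know that $N$ is backward invariant under every element of $S$, i.e. $f^{-1}(N)\subset N$; equivalently, that a point whose image lands in some single-map Julia set already lies in $N$. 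I would establish this invariance from the composition identities for Julia sets, such as $J(a\circ b)=b^{-1}\bigl(J(b\circ a)\bigr)$, which relate the Julia set of a composite to a preimage of the Julia set of its reverse. This transfer from \emph{``the iterates of each individual element are normal on $U$''} to \emph{``the whole family $S$ is normal on $U$''} is the genuinely substantial point, and it is in effect the content of the Hinkkanen--Martin lemmas being cited.

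Finally, perfectness of $J(S)$ follows from the identity. Since $J(S)$ is closed, only the absence of isolated points needs proof. For any $f\in S$ with $\deg f\ge 2$ the classical Julia set $J(f)$ is perfect, hence has no isolated points, and the very argument above uses only such elements to produce the omitted values, so the union $\bigcup_{\deg f\ge 2}J(f)$ is already dense in $J(S)$. As the closure of a union of perfect sets that accumulate at every point of the closure again has no isolated points, we conclude that $J(S)=N$ is perfect.
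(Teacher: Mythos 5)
The paper gives no proof of this statement at all; it is quoted verbatim from Hinkkanen and Martin \cite{hin} (their Lemma~3.1 and Corollary~3.1), so your attempt has to be judged on its own merits. The easy inclusion $\overline{\bigcup_{f\in S}J(f)}\subset J(S)$ and the final deduction of perfectness from the identity are fine. The problem is the reverse inclusion, and you have in fact located the gap yourself without closing it. Your Montel argument needs the claim $f(U)\cap J(g_0)=\emptyset$ for every $f\in S$ and a \emph{fixed} $g_0$ of degree at least two, and for that you need $f^{-1}\bigl(J(g_0)\bigr)\subset N:=\overline{\bigcup_{h\in S}J(h)}$ for every $f$, i.e.\ backward invariance of $N$. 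The composition identity $J(a\circ b)=b^{-1}\bigl(J(b\circ a)\bigr)$ does not deliver this: taking $b=f$ it only shows that $f^{-1}\bigl(J(f\circ h)\bigr)=J(h\circ f)\subset N$, i.e.\ it controls preimages of Julia sets of elements that factor as $f\circ(\cdot)$, whereas $g_0$ need not be of this form and in general $J(g_0)\neq J(f\circ h)$ for any $h\in S$ (already in Example~\ref{efi}, with $g_{0}=z^{2}$ and $f=z^{2}/a$, every element of $f\circ S$ has as Julia set a circle of radius strictly greater than $1$, never the unit circle $J(g_{0})$). The alternative you sketch --- pulling back non-normality of $\{g_0^n\}$ through $f$ to conclude $z_1\in J(S)$ --- is circular, since $J(S)\subset N$ is exactly what is being proved. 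Your closing remark that this step ``is in effect the content of the Hinkkanen--Martin lemmas being cited'' concedes the point: the statement you were asked to prove \emph{is} that lemma, so it cannot be outsourced to it.

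For comparison, the route actually taken in \cite{hin} is different: one first proves that $J(S)$ is perfect and then that repelling fixed points of elements of $S$ are dense in $J(S)$, by the classical three-disjoint-neighbourhoods argument (perfectness gives three points of $J(S)$ in any neighbourhood of a given Julia point; Montel applied to the non-normal family $S$ on each of the three neighbourhoods produces an element of $S$ mapping one neighbourhood over another, whence a repelling fixed point of some $f\in S$ nearby). Since a repelling fixed point of $f$ lies in $J(f)$, the inclusion $J(S)\subset\overline{\bigcup_{f\in S}J(f)}$ follows at once. If you want a self-contained proof, that is the argument to reconstruct; the shortcut through backward invariance of $N$ does not close.
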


K. K. Poon proved the following results ({\cite[Theorems 4.1 and 4.2] {poo}}).
\begin{theorem}\label{perf1}
 Let $ S $ be a transcendental semigroup. Then
 Julia set $ J(S) $ is perfect and $ J(S) = \overline{\bigcup_{f \in S} J(f)} $ 
 \end{theorem}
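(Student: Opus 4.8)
The statement has two assertions: the identity $J(S)=\overline{\bigcup_{f\in S}J(f)}$ and the perfectness of $J(S)$. My plan is to prove the identity first and then deduce perfectness from it together with the classical fact that $J(f)$ is perfect for every transcendental entire $f$. One inclusion of the identity is immediate: by Theorem \ref{1c}(2) we have $J(f)\subseteq J(S)$ for every $f\in S$, and since $J(S)$ is closed this gives $\overline{\bigcup_{f\in S}J(f)}\subseteq J(S)$. The whole difficulty lies in the reverse inclusion $J(S)\subseteq\overline{\bigcup_{f\in S}J(f)}$.

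For the reverse inclusion I would not argue normality of the family $S$ directly, since an arbitrary sequence drawn from $S$ is a sequence of unrelated compositions rather than the iterates of a single map; instead I would use a minimality characterisation of $J(S)$. Write $E=\overline{\bigcup_{f\in S}J(f)}$. The key intermediate claim is that $E$ is completely invariant under $S$, that is $g(E)\subseteq E$ and $g^{-1}(E)\subseteq E$ for every $g\in S$. Granting this, I finish as follows: $E$ is closed, backward invariant, and contains the infinite (hence at least three-point) set $J(f)$; backward invariance gives $g(\mathbb{C}\setminus E)\subseteq\mathbb{C}\setminus E$ for every $g\in S$, so on the open set $\mathbb{C}\setminus E$ the whole family $S$ omits the infinitely many values of $E$ (together with $\infty$, as the maps are entire). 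Montel's theorem then forces $S$ to be normal on $\mathbb{C}\setminus E$, i.e. $\mathbb{C}\setminus E\subseteq F(S)$, which is exactly $J(S)\subseteq E$.

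The main obstacle is therefore the complete invariance of $E$, and here the semigroup structure is essential. For $g,h\in S$ both $g\circ h$ and $h\circ g$ again lie in $S$, and one has the semiconjugacy $g\circ(h\circ g)=(g\circ h)\circ g$, from which the classical relation $g\big(J(h\circ g)\big)=J(g\circ h)$ between the Julia sets of the two single maps follows. Combining such relations with the identities $J(f)=J(f^{n})$ and the complete invariance of each single Julia set $J(f)$ under $f$, one checks that applying $g$ or a branch of $g^{-1}$ to a point of some $J(h)$ lands in $J(h')$ for a suitably chosen $h'\in S$; taking closures then yields $g(E)\subseteq E$ and $g^{-1}(E)\subseteq E$. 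The delicate points I expect to spend effort on are the bookkeeping of which composition plays the role of $h'$ and the usual transcendental caveat that these Julia-set relations hold only up to the (at most one-point) exceptional set of the maps involved, which must be absorbed into the closure.

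Finally, perfectness follows cheaply from the identity. For any transcendental entire $f\in S$ the set $J(f)$ is non-empty and perfect by classical theory, so $\bigcup_{f\in S}J(f)$ is non-empty and has no isolated points, since each of its points already lies in some perfect $J(f)$. Passing to the closure preserves the absence of isolated points, so $J(S)=E$ is a non-empty closed set without isolated points, i.e. perfect.
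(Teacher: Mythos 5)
Your opening and closing moves are fine: the inclusion $\overline{\bigcup_{f\in S}J(f)}\subseteq J(S)$ does follow from Theorem \ref{1c}(2) and the closedness of $J(S)$; the Montel argument correctly reduces the reverse inclusion to the \emph{backward} invariance of $E=\overline{\bigcup_{f\in S}J(f)}$; and the deduction of perfectness from the identity together with the classical non-emptiness and perfectness of each $J(f)$ is sound. (Note that the paper itself gives no proof of this theorem; it is quoted from Poon.) The genuine gap is your central claim, the complete invariance of $E$, which is where all the content of the theorem lives. Its forward half is simply false: since the theorem asserts $E=J(S)$, forward invariance of $E$ would say $J(S)$ is $S$-forward invariant, and the paper's Example \ref{efi} (Hinkkanen--Martin) exhibits a semigroup $S=\langle z^{2},z^{2}/a\rangle$ whose Julia set, the closed annulus $1\le |z|\le |a|$, is not forward invariant; concretely $z\mapsto z^{2}$ sends the circle $|z|=|a|$, which is exactly $J(z^{2}/a)$, onto the circle $|z|=|a|^{2}$, entirely outside $E$. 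That example is rational, but your mechanism nowhere uses transcendence, so it cannot be sound.

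More importantly, the backward half --- the only part your Montel step actually needs --- does not follow from the semiconjugacy relation as you deploy it. The relation $g^{-1}(J(g\circ h))=J(h\circ g)$ controls preimages under $g$ only of Julia sets of elements that factor as $g\circ(\cdot)$; for a general $h\in S$ with $g(z)\in J(h)$ there is no $h'$ with $J(h)\subseteq J(g\circ h')$, and no exact containment $g^{-1}(J(h))\subseteq J(h')$ is available. In the same example, with $g(z)=z^{2}/a$ and $h(z)=z^{2}$, the set $g^{-1}(J(h))$ is the circle $|z|=|a|^{1/2}$, which is the Julia set of \emph{no} element of $S$ (the elements are $z\mapsto z^{2^{n}}/a^{k}$ with Julia sets $|z|=|a|^{k/(2^{n}-1)}$, and $k/(2^{n}-1)=1/2$ has no integer solution); it lies in $E$ only as a limit of such circles. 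So backward invariance of $E$ is a density statement, not a bookkeeping one, and establishing it is essentially equivalent to the theorem itself. The known proofs (Hinkkanen--Martin in the rational case, Poon in the transcendental case) instead show directly that any open set meeting $J(S)$ must meet some $J(h)$ with $h\in S$, by combining non-normality of $S$ with Montel's theorem and the density of repelling periodic points in the Julia set of a single map (Baker's theorem for entire functions); an argument of that kind is what is needed to close your gap.
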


From the  theorem \ref{1c} ((1) and (3)), we can say that the Fatou set and the escaping set may be empty.
For example, the escaping set of semigroup $ S = \langle f, g \rangle $ generated by functions $ f(z) = e^{z} $ and $ g(z) =  e^{-z}$ is empty (the particular function $ h = g \circ f^{k} \in S $ (say) is iteratively bounded at any $ z \in I(f) $). There are several transcendental semigroups where Fatou set and escaping set are non-empty (see for instance {\cite[Examples 3.2 and 3.3]{kum4}} and
 {\cite[Examples 2.6 and 2.7]{kum2}})

\section{Invariant features of Fatou set, Julia set and escaping set}

The main contrast between classical complex dynamics and semigroup dynamics will appear in the invariant features of Fatou set, Julia set and escaping set. Note that invariant feature  is considered a very basic and fundamental structure of  these sets. 

\begin{dfn}[\textbf{Forward, backward and completely invariant set}]
For a given semigroup $S$, a set $U\subset \mathbb{C}$ is said to be $ S $-\textit{forward invariant}\index{forward ! invariant set} if $f(U)\subset U$ for all $f\in S$. It is said to be $ S $-\textit{backward invariant}\index{backward ! invariant set} if $f^{-1}(U) = \{z \in \mathbb{C}: f(z) \in U \}\subset U$ for all $ f\in S $ and it is called $ S $-\textit{completely invariant}\index{completely invariant ! set} if it is both S-forward and S-backward invariant.
\end{dfn}
If $ S $ is a rational semigroup, then Hinkkanen and Martin {\cite [Theorem 2.1]{hin}} proved the following result.
\begin{theorem}\label{fi}
The Fatou set $ F(S) $ of $ S $ is S-forward invariant and Julia set $ J(S) $ of $ S $ is S-backward invariant.
\end{theorem}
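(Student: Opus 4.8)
The plan is to prove the forward invariance of $F(S)$ directly and then obtain the backward invariance of $J(S)$ for free, since the two assertions are logically equivalent. Indeed, taking complements in $\mathbb{C}_{\infty}$, the statement $f(F(S)) \subset F(S)$ for every $f \in S$ is the same as saying that $f(z) \in J(S)$ forces $z \in J(S)$, i.e. $f^{-1}(J(S)) \subset J(S)$. So the whole theorem reduces to the single inclusion $f(F(S)) \subset F(S)$, and I would record this equivalence first.

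The engine of the argument is the closure of $S$ under composition: if $g \in S$ and $f \in S$, then $g \circ f \in S$, which is exactly where the semigroup structure is used. I would fix $z_0 \in F(S)$ and $f \in S$, set $w_0 = f(z_0)$, and aim to show that $S$ is normal at $w_0$. It is convenient to read normality through equicontinuity in the spherical metric $d$ on $\mathbb{C}_{\infty}$ (Arzel\`a--Ascoli), so that $z_0 \in F(S)$ means that $S$ is equicontinuous at $z_0$. The subfamily $\{\, g \circ f : g \in S \,\}$ is contained in $S$, hence is also equicontinuous at $z_0$.

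The second step transports this equicontinuity from $z_0$ to $w_0$ using that a non-constant rational map is an open mapping. Given $\varepsilon > 0$, equicontinuity of $\{\, g \circ f \,\}$ at $z_0$ produces a $\delta > 0$ controlling $d\bigl(g(f(z)), g(f(z_0))\bigr) < \varepsilon$ for all $g \in S$ and all $z$ with $d(z, z_0) < \delta$. Since $f$ is open, $f$ maps the spherical ball $B(z_0, \delta)$ onto an open set containing some ball $B(w_0, \delta')$; thus for every $w \in B(w_0, \delta')$ one can solve $f(z) = w$ with $z \in B(z_0, \delta)$, and conclude $d\bigl(g(w), g(w_0)\bigr) < \varepsilon$ uniformly in $g \in S$. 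This is precisely equicontinuity of $S$ at $w_0$, so $w_0 \in F(S)$, establishing forward invariance.

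The step I expect to be the main obstacle is the transport across critical points of $f$: where $f'(z_0) = 0$ there is no local inverse, so one cannot simply write $g = (g \circ f) \circ f^{-1}$ near $w_0$. Routing the argument through equicontinuity together with the openness of $f$ (rather than through a local inverse) is exactly what sidesteps this, since openness still guarantees that preimages $z$ of nearby points $w$ may be chosen close to $z_0$. I would phrase everything in the spherical metric throughout, so that the divergent case $g \to \infty$ is handled on the same footing as finite limits.
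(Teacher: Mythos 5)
Your argument is correct, and it is essentially the standard proof of this result; but note that the paper does not actually prove Theorem \ref{fi} at all --- the statement is quoted from Hinkkanen and Martin, and the proof environment in the paper that is labelled ``Proof of the theorem \ref{fi}'' is a mislabelled proof of Theorem \ref{fi2} (forward invariance of the escaping set), which proceeds by an entirely different route through the ideals $S\circ g$ of the semigroup. So there is no in-paper argument to compare yours against, and your write-up supplies a proof the paper omits. Two small points to tighten. First, normality of $S$ at $w_0$ means normality on a neighbourhood of $w_0$, which via Arzel\`{a}--Ascoli requires equicontinuity at every point of that neighbourhood, not just at $w_0$ itself; your estimate $d\bigl(g(w),g(w_0)\bigr)<\varepsilon$ for all $w\in B(w_0,\delta')$ and all $g\in S$ does deliver this (the triangle inequality at any $w_1\in B(w_0,\delta')$ gives $d\bigl(g(w),g(w_1)\bigr)<2\varepsilon$ for $w$ near $w_1$), but you should say so explicitly. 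Second, openness of $f$ requires $f$ to be non-constant; this is part of the standing definition of a rational semigroup, but it is worth recording since it is exactly the hypothesis your transport step leans on. The reduction of backward invariance of $J(S)$ to forward invariance of $F(S)$ by taking complements in $\mathbb{C}_{\infty}$ is exactly right, and routing the transport through openness rather than a local inverse correctly disposes of the critical points of $f$.
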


If $ S $ is a transcendental semigroup, then K. K. Poon  {\cite[Theorem 2.1] {poo}} proved the following result.
\begin{theorem}\label{fi1}
The Fatou set $ F(S) $ of $ S $ is S-forward invariant and Julia set $ J(S) $ of $ S $ is S-backward invariant.
\end{theorem}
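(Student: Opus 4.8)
The plan is to establish forward invariance of $F(S)$ first and then deduce backward invariance of $J(S)$ by passing to complements, since the two assertions are dual. Indeed, if $f(F(S))\subset F(S)$ for every $f\in S$, then for any $w$ with $f(w)\in J(S)=\mathbb{C}\setminus F(S)$ we cannot have $w\in F(S)$ (otherwise $f(w)\in f(F(S))\subset F(S)$, a contradiction), whence $f^{-1}(J(S))\subset J(S)$. So all the content lies in proving that $F(S)$ is $S$-forward invariant.

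To prove $f(F(S))\subset F(S)$ for a fixed $f\in S$, I would fix $z_{0}\in F(S)$, put $w_{0}=f(z_{0})$, and choose an open neighborhood $N$ of $z_{0}$ on which the family $S$ is normal. Because $f$ is a non-constant entire map it is an open mapping, so $f(N)$ is an open neighborhood of $w_{0}$, and the goal reduces to showing that $S$ is normal on $f(N)$. First I would take an arbitrary sequence $(g_{n})\subset S$ and consider $(g_{n}\circ f)$, which again lies in $S$ since $S$ is closed under composition. Normality of $S$ on $N$ then furnishes a subsequence $(g_{n_{j}}\circ f)$ converging locally uniformly on $N$ either to a holomorphic limit $\psi$ or to $\infty$.

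The key step is to transfer this convergence across $f$ to obtain convergence of $(g_{n_{j}})$ on $f(N)$, and the one genuine obstacle is the possible non-injectivity of $f$. For $w\in f(N)$, write $w=f(z)$ with $z\in N$; then $g_{n_{j}}(w)=(g_{n_{j}}\circ f)(z)\to\psi(z)$, and since the left-hand value does not depend on the chosen preimage $z$, the limit $\psi$ must be constant on each fibre $f^{-1}(w)\cap N$. This makes $\tilde{\psi}(w):=\psi(z)$ well defined on $f(N)$ and gives pointwise convergence $g_{n_{j}}\to\tilde{\psi}$ there. To upgrade this to local uniform convergence I would use openness once more: for each $w=f(z)$ pick a closed disk $\overline{D}(z,r)\subset N$, so that $f(D(z,r))$ is an open neighborhood of $w$, and for any $w'=f(z')\in f(D(z,r))$ estimate $|g_{n_{j}}(w')-\tilde{\psi}(w')|\le\sup_{\overline{D}(z,r)}|g_{n_{j}}\circ f-\psi|$, which tends to $0$. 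Hence $g_{n_{j}}\to\tilde{\psi}$ uniformly on a neighborhood of each point of $f(N)$, so locally uniformly on $f(N)$, and $\tilde{\psi}$ is holomorphic as a local uniform limit of holomorphic functions; the case $\psi\equiv\infty$ is handled by the same estimate. Since $(g_{n})$ was arbitrary, $S$ is normal on $f(N)$, so $w_{0}\in F(S)$, completing the forward invariance.

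I expect the main difficulty to be exactly this fibre-constancy-plus-openness argument, namely passing from normality of $(g_{n}\circ f)$ on $N$ to normality of $(g_{n})$ on $f(N)$ without any injectivity of $f$. Everything else, the closure of $S$ under composition, the openness of non-constant entire maps, and the complement argument for $J(S)$, is routine.
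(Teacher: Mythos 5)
Your argument is correct. Note, however, that the paper itself gives no proof of this statement: Theorem \ref{fi1} is quoted from K.~K.~Poon (and its rational counterpart, Theorem \ref{fi}, from Hinkkanen--Martin), so there is no in-paper argument to compare yours against; the displayed ``Proof of the theorem \ref{fi}'' in the paper is actually a proof of Theorem \ref{fi2} about the escaping set. Your proof is the standard one and is complete: the reduction of backward invariance of $J(S)$ to forward invariance of $F(S)$ by complementation is immediate, and the substantive step is exactly where you locate it, namely passing from local uniform convergence of a subsequence of $(g_{n}\circ f)$ on a neighborhood $N$ of $z_{0}$ to convergence of $(g_{n_{j}})$ on $f(N)$. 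Your handling of that step is sound: the limit $\psi$ is constant on fibres of $f$ in $N$ because $(g_{n_{j}}\circ f)(z)$ depends only on $f(z)$, the open mapping theorem makes $f(N)$ a neighborhood of $f(z_{0})$ without any injectivity hypothesis on $f$, and the estimate $\lvert g_{n_{j}}(w')-\tilde{\psi}(w')\rvert\le\sup_{\overline{D}(z,r)}\lvert g_{n_{j}}\circ f-\psi\rvert$ correctly upgrades pointwise to locally uniform convergence (with the obvious analogue, using an infimum, when $\psi\equiv\infty$). One could shorten the argument by restricting to a disk on which $f$ admits local inverse branches away from critical points, but your fibre-constancy argument is cleaner precisely because it never needs $f$ to be locally injective.
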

Dinesh Kumar and Sanjay Kumar {\cite[Theorem 4.1]{kum2}} proved the following result that shows  escaping set $I(S)$ is also S-forward invariant.
Here,  we provide another proof based on our definition \ref{2ab} of escaping set.
\begin{theorem}\label{fi2}
The escaping set $ I(S) $ of transcendental semigroup $ S $ is S-forward invariant.
\end{theorem}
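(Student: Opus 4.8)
The plan is to unwind the definition of $S$-forward invariance and reduce it to a statement about arbitrary sequences in $S$, for which Theorem \ref{ad1} is tailor-made. By definition I must show that $g(I(S)) \subset I(S)$ for every $g \in S$; equivalently, fixing an escaping point $z \in I(S)$ and an arbitrary $g \in S$, I must verify that $w := g(z)$ again lies in $I(S)$. By Definition \ref{2ab} this means proving that $h^{n}(w) \to \infty$ as $n \to \infty$ for every $h \in S$.

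First I would fix $h \in S$ and introduce the sequence of maps $\psi_{n} := h^{n}\circ g$. Since $S$ is closed under functional composition, $h^{n} \in S$ and hence $\psi_{n} \in S$ for every $n$; moreover $\psi_{n}(z) = h^{n}(g(z)) = h^{n}(w)$. Thus the quantity I want to push to $\infty$ is precisely the value at $z$ of a sequence of semigroup elements.

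The key step is then to invoke Theorem \ref{ad1}: because $z \in I(S)$, every sequence in $S$ --- in particular $(\psi_{n})$ and each of its subsequences --- admits a subsequence diverging to $\infty$ at $z$. To upgrade this subsequential divergence to genuine divergence of the full sequence, I would apply the elementary criterion that a sequence $a_{n}$ tends to $\infty$ exactly when every subsequence of $(a_{n})$ has a further subsequence tending to $\infty$. Applied to $a_{n} = \psi_{n}(z)$ this yields $h^{n}(w) = \psi_{n}(z) \to \infty$, and since $h \in S$ was arbitrary we conclude $w = g(z) \in I(S)$. Equivalently, one may argue by contradiction: if $h^{n}(w) \not\to \infty$ for some $h$, then some bounded subsequence $\psi_{n_{k}}(z)$ would directly contradict the escaping subsequence provided by Theorem \ref{ad1}.

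The main obstacle is the non-commutativity of $S$. In the classical (cyclic) setting one simply writes $h^{n}(g(z)) = h^{n+1}(z)$ when $g = h$, or otherwise relates the forward orbit to the iterates of a single map; here no single $f \in S$ has iterates realizing the values $h^{n}(g(z))$, so that naive reduction is unavailable. The whole point is to replace ``iterates of one fixed map'' by ``the sequence of distinct maps $h^{n}\circ g$'' and then feed that sequence into Theorem \ref{ad1}, which is exactly the statement converting membership in $I(S)$ into control over an \emph{arbitrary} sequence in $S$ evaluated at the escaping point.
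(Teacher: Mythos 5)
Your argument is correct in outline, but it takes a genuinely different route from the paper's. The paper's proof works directly from Definition \ref{2ab}: for $z\in I(S)$ and $g\in S$ it takes the single semigroup element $f\circ g\in S$, uses the hypothesis $(f\circ g)^{n}(z)\to\infty$, and then applies the algebraic decomposition $(f\circ g)^{n}=h_{n}\circ g$ with $h_{n}\in S$ from Proposition \ref{ss1} to transport divergence from $z$ to $g(z)$; it never invokes Theorem \ref{ad1}. You instead fix the target map $h$ first, form the sequence $\psi_{n}=h^{n}\circ g$ of semigroup elements, and feed it (and all of its subsequences) into Theorem \ref{ad1}, using the ``every subsequence has a further subsequence tending to $\infty$'' criterion to upgrade subsequential divergence to genuine divergence of $h^{n}(g(z))$. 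That last step is handled exactly right, and your reduction is arguably cleaner than the paper's, which at one point asserts that each $h_{n}=f\circ(g\circ f)^{n-1}$ must equal a power $p^{m_{i}}$ of a single element of $S$ --- a claim that is difficult to justify and that your argument avoids entirely.

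The one serious caveat is that you have outsourced all of the analytic content to Theorem \ref{ad1}, which the paper states without proof as being ``clear from the definition''. It is not clear: Definition \ref{2ab} only controls the diagonal orbits $f(z), f^{2}(z), f^{3}(z),\dots$ for each fixed $f\in S$, whereas Theorem \ref{ad1} makes a claim about \emph{arbitrary} sequences in $S$ evaluated at $z$ --- including precisely the non-diagonal sequences $h^{n}\circ g$ that you need --- and is therefore at least as strong as the forward invariance you are proving. If Theorem \ref{ad1} is accepted as an established prior result, your proof is complete; if it must itself be derived from the definition, your argument risks circularity, and you would need to fall back on a direct decomposition of $(f\circ g)^{n}$ of the kind the paper uses.
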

Before proving this theorem \ref{fi2}, we define and discuss some special subsets of semigrioup $ S $.

\begin{dfn}[\textbf{Subsemigroup, left ideal, right ideal and ideal}] A non-empty subset $ T $ of  semigroup $ S $ is a subsemigroup of $ S $ if $ f \circ g \in T $ for all $ f, \; g \in T $.
A non-empty subset $ I $ of $ S $ is called a left (or right) ideal of $ S $ if $ f \circ g \in I $ (or $ g \circ f \in I )$ for all $ f \in S $ and $ g\in I $. $ I $ is called an ideal of $ S $ if it is both left and right ideal of $ S $. 
 \end{dfn}
 Every (left or right) ideal is a subsemigroup but converse may not hold. The left ideal,  right ideal  and an ideal of a semigroup $ S $ can be constructed easily as shown in the following proposition. 
 \begin{prop}\label{ss1}
 For any $ f \in S $, the set $ S \circ f = \{g \circ f: g \in S\} $ is a left ideal of $ S $, the set $ f \circ S = \{f \circ g: g \in S\} $  is an right ideal of $ S $ and the set $ S\circ f \circ S = \{h \circ f \circ g: h, g \in S \} $ is an ideal of $ S $. For any $ n \in \mathbb{N}, \;  (g \circ f)^{n}  = h \circ f $ for some $ h \in S $ and $ (h \circ f \circ g)^{n}  = p\circ f \circ q$ for some $ p, q \in S $. 
\end{prop}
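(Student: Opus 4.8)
The plan is to verify every assertion directly from the definitions, relying on just two facts: functional composition is associative, and $S$ is closed under composition (both are part of the definition of a holomorphic semigroup). No normality or analytic input is needed, so the entire statement is a purely algebraic exercise about the semigroup $(S,\circ)$.

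First I would dispatch the three ideal claims. For $S\circ f$, I take a typical element $g\circ f$ (with $g\in S$) and an arbitrary $h\in S$, and rewrite $h\circ(g\circ f)=(h\circ g)\circ f$; since $h\circ g\in S$, the product again has the form $(\,\cdot\,)\circ f$, so it lies in $S\circ f$, which is exactly the left-ideal condition. The right-ideal claim for $f\circ S$ is the mirror image: $(f\circ g)\circ h=f\circ(g\circ h)\in f\circ S$. For $S\circ f\circ S$ I would check both sides simultaneously, noting that $p\circ(h\circ f\circ g)=(p\circ h)\circ f\circ g$ and $(h\circ f\circ g)\circ p=h\circ f\circ(g\circ p)$ both keep the shape $(\,\cdot\,)\circ f\circ(\,\cdot\,)$ because $p\circ h$ and $g\circ p$ remain in $S$; hence $S\circ f\circ S$ is simultaneously a left and a right ideal, i.e.\ an ideal.

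For the two power identities I would argue by induction on $n$, choosing the induction hypothesis to match the required shape. For the first, the hypothesis $(g\circ f)^{n}=h_{n}\circ f$ with $h_{n}\in S$ holds at $n=1$ with $h_{1}=g$, and the step follows from $(g\circ f)^{n+1}=(h_{n}\circ f)\circ(g\circ f)=(h_{n}\circ f\circ g)\circ f$, so that $h_{n+1}=h_{n}\circ f\circ g\in S$. For the second, the hypothesis $(h\circ f\circ g)^{n}=h\circ f\circ q_{n}$ with $q_{n}\in S$ holds at $n=1$ with $q_{1}=g$, and appending one more factor on the right gives $(h\circ f\circ g)^{n+1}=(h\circ f\circ q_{n})\circ(h\circ f\circ g)=h\circ f\circ(q_{n}\circ h\circ f\circ g)$, so $q_{n+1}=q_{n}\circ h\circ f\circ g\in S$; note that the required form only demands $p\circ f\circ q$, and it is immaterial that $q$ itself happens to contain further copies of $f$.

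I do not expect any genuine obstacle here: the proposition is a formal consequence of associativity together with closure of $S$. The single point meriting attention is cosmetic, namely choosing the correct inductive invariant (equivalently, the correct re-bracketing) so that the target shape $(\,\cdot\,)\circ f$ or $(\,\cdot\,)\circ f\circ(\,\cdot\,)$ is preserved at each step. Because a semigroup need not possess an identity, one should resist writing zeroth powers and instead anchor each induction at $n=1$, where the witnesses are read off immediately.
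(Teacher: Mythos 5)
Your proof is correct, and the paper itself states Proposition \ref{ss1} without any proof (it is presented as an easy construction), so there is nothing in the source to diverge from: your direct verification via associativity, closure of $S$ under composition, and induction anchored at $n=1$ is exactly the argument the authors implicitly rely on, for instance when they invoke $(f\circ g)^{n}=h\circ g$ in the proof of forward invariance of $I(S)$. Your remark about avoiding zeroth powers because a semigroup need not contain an identity is a sensible precaution and does not affect the result.
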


\begin{proof}[Proof of the theorem \ref{fi}]
Let $ z\in I(S) $. Then  by the definition \ref{2ab},  semigroup $ S $ is iteratively divergent at $ z $ and  for any $ g \in S $,  the subsemigroup $ S\circ g = \{f \circ g : f \in S \} $ is also iteratively divergent at $ z $.  That is, $ (f \circ g)^{n}(z) \to \infty\; \text{as}\;  n \to \infty $ for all $ f \in S $. By the propositon \ref{ss1}, for all $ n \in \mathbb{N} $, we have $ (f \circ g)^{n}  = h_{n_{i}} \circ g $ for some $ h_{n_{i}} \in S $ where $ n_{i} $ depends on $ n $. Therefore, $ (f \circ g)^{n}(z) \to \infty $ as $ n \to \infty \Rightarrow (h_{n_{i}} \circ g)(z) = h_{n_{i}}(g(z)) \to \infty$. Since $ h_{n_{i}}\in S $ is transcendental entire function with $  h_{n_{i}}(g(z)) \to \infty $,  we must have $ h_{n_{i}} = p^{m_{i}} $ for some $ p \in S $ and $ m_{i} \in \mathbb{N} $. Thus, from $  (f \circ g)^{n}(z) \to \infty \; \text{as}\; n \to \infty $ we get  $p^{m_{i}}(g(z)) \to \infty  $ as $ m_{i} \to \infty $. This proves that $S $ diverges at $ g(z) $, so $ g(z) \in I(S) $ for all $ g \in S $.  Hence $ I(S) $ is S-forward invariant.
\end{proof}

In the case of rational semigroup, Hinkkanen and Martin {\cite[Example1]{hin}} provided the following example that show that Fatou set $ F(S) $ need not be backward invariant and Julia set $ J(S) $ need not be forward invariant.
\begin{exm}\label{efi}
For a rational semigroup $ S =\langle z^{2}, z^{2}/a \rangle $, where $ a \in \mathbb{C}, |a| >1 $, the Fatou set $ F(S) = \{z : |z| <1\; \text{or}\; |z| > |a| \}  $ is not S-backward invariant and Julia set $ J(S)  = \{z : 1 \leq  |z| \leq  |a| \} $ is not S-forward invariant.  
\end{exm}

Fatou \cite{fat} and Julia \cite{jul} independently proved that classical Fatou set and Julia set of rational function are completely invariant. Note that all three sets $ F(f), J(f) $ and $ I(f) $ of transcendental entire function are also completely invariant. This is also fundamental contrast between classical complex dynamics and semigroup dynamics.

We prove under certain conditions, Fatou set is S-backward invariant  and Julia set is S-forward invariant  of a rational semigroup $ S $. First we define the notion of abelian holomorphic semigroup.
\begin{dfn}
Let $ S =\langle f_{\alpha} \rangle $  be a holomorphic semigroup. We say the $ S $ is abelian if $ f_{\alpha}\circ f_{\beta} = f_{\beta}\circ f_{\alpha} $ for all generators $ f_{\alpha} $ and $ f_{\beta} $ of $ S $. 
\end{dfn}

\begin{theorem}\label{bc}
The Fatou set $ F(S) $ is S-backward invariant and Julia set $ J(S)  $ is S-forward invariant if   $ S $ is an abelian rational semigroup. 
\end{theorem}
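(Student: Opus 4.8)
The plan is to reduce both assertions to a single invariance and then import the classical theory of commuting rational maps through the representation $J(S)=\overline{\bigcup_{f\in S}J(f)}$ of Theorem \ref{perf}. Since $F(S)=\mathbb{C}_{\infty}\setminus J(S)$, for a fixed $g\in S$ the inclusion $g^{-1}(F(S))\subseteq F(S)$ holds exactly when $g(J(S))\subseteq J(S)$ (take complements and use $g^{-1}(\mathbb{C}_{\infty}\setminus A)=\mathbb{C}_{\infty}\setminus g^{-1}(A)$). Thus the two statements of the theorem are complementary, and it suffices to prove that $J(S)$ is S-forward invariant, i.e. $g(J(S))\subseteq J(S)$ for every $g\in S$.

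The first point to record is that in an abelian semigroup not only the generators but all elements commute. Indeed, by Proposition \ref{ts1} every element of $S$ is a finite composition of generators; since the generators commute pairwise by hypothesis, any two such words can be transposed letter by letter into one another, so $f\circ g=g\circ f$ for all $f,g\in S$.

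The heart of the matter is a statement about a single commuting pair of rational maps of degree at least two: if $f\circ g=g\circ f$, then $g(J(f))\subseteq J(f)$. I would prove this using that the repelling periodic points of $f$ are dense in $J(f)$. If $\zeta$ is a repelling periodic point of period $p$, with multiplier $\lambda=(f^{p})'(\zeta)$, $|\lambda|>1$, then $f^{p}\circ g=g\circ f^{p}$ gives $f^{p}(g(\zeta))=g(\zeta)$, so $g(\zeta)$ is fixed by $f^{p}$; differentiating the same identity at $\zeta$ yields $(f^{p})'(g(\zeta))\,g'(\zeta)=g'(\zeta)\,\lambda$, so whenever $g'(\zeta)\neq0$ we get $(f^{p})'(g(\zeta))=\lambda$ and hence $g(\zeta)\in J(f)$. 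As $J(f)$ is perfect, discarding the finitely many critical points of $g$ leaves a dense set of such $\zeta$, and continuity of $g$ together with closedness of $J(f)$ gives $g(J(f))\subseteq J(f)$.

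Finally I would assemble the pieces. Fix $g\in S$. Using continuity of $g$, Theorem \ref{perf}, and the key inclusion applied to each commuting pair $(f,g)$,
\[
g(J(S))=g\Bigl(\overline{\textstyle\bigcup_{f\in S}J(f)}\Bigr)\subseteq\overline{\textstyle\bigcup_{f\in S}g(J(f))}\subseteq\overline{\textstyle\bigcup_{f\in S}J(f)}=J(S).
\]
This is the desired forward invariance of $J(S)$, and by the complementation in the first step, $F(S)$ is S-backward invariant. The main obstacle is the key lemma on commuting pairs, and in particular the standing hypothesis that the maps have degree at least two (M\"obius generators are genuine exceptions); the rest is formal once this classical input is in place.
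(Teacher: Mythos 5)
Your proposal is correct and takes a genuinely different route from the paper's. The paper argues directly on the Fatou set: assuming $g(z)\in F(S)$, it extracts a subsequence converging locally uniformly near $g(z)$ and uses commutativity to transport that convergence back to $z$, with no appeal to the structure of $J(S)$. You instead pass to the set-theoretically equivalent statement $g(J(S))\subseteq J(S)$, feed in the Hinkkanen--Martin representation $J(S)=\overline{\bigcup_{f\in S}J(f)}$ of Theorem \ref{perf}, and reduce everything to the single-pair fact that $g(J(f))\subseteq J(f)$ for commuting rational maps of degree at least two, which you re-derive from density of repelling periodic points. That key inclusion is the easy half of the Fatou--Julia theorem the paper only invokes later (Lemma \ref{per}, used for Theorem \ref{abe}), so your argument in effect front-loads the classical commuting-maps theory. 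What your route buys is rigor at the delicate point: the paper's normal-families argument, as written, only exhibits convergence of the composed sequence $g\circ f^{n_j}$ near $z$ and leaves implicit why normality of the subfamily $S\circ g$ at $z$ yields normality of all of $S$ at $z$; your complementation step is exact and your key lemma is fully proved. What it costs is the explicit degree hypothesis, which you rightly flag: elements of degree one must be excluded or handled separately (though a M\"obius $g$ commuting with $f$ conjugates $f$ to itself and hence still preserves $J(f)$), whereas the paper's normal-families approach makes no such distinction.
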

\begin{proof}
We prove that if $ g(z) \in F(S) $, then $ z \in F(S) $ for all $ g\in S$. This follows that $ g^{-1}(F(S)) \subset F(S) $ for all $ g \in S $. Suppose, $ g(z) \in F(S) $.  Let $ U $ be a neighborhood of $ g(z)$ such that $ \overline{U}\subset F(S) $. Then there is a subsequence $ (f^{n_{j}})$ such that $ f^{n_{j}}(g(z))\to f(g(z)) $ uniformly on $ U $, where $ f $ is rational or constant $ \infty $. Since $ S $ is abelian, so we have $g(f^{n_{j}}(z))\to g(f(z)) $ uniformly on $ U $ as well. This shows that $g\circ f^{n_{j}}\to g\circ f $ uniformly on $ U $. This proves $ z \in F(S) $ for all $ g\in S$. 
\end{proof}

From the theorems \ref{fi} and \ref{bc}, we can say that Fatou set $ F(S) $ and Julia set $ J(S) $ are S-completely invariant if $ S $ is an abelian  rational semigroup. 
The following example of Hinkkanen and Martin {\cite[Example 2]{hin}} is best for  the above theorem \ref{bc}.
\begin{exm}
The semigroup $ S= \langle T_{n}(z): n =0,1,2, \ldots \rangle $ generated by Tchebyshev polynomials $ T_{n}(z) $ defined by $ T_{0}(z) =1, T_{1}(z) = z $ and $ T_{n +1}(z) = 2zT_{n}(z) - T_{n -1}(z)  $ is abelian.
Therefore, by above theorem \ref{bc}, Fatou set $ F(S)  $ is S-forward invariant and Julia set $ J(S) $ is S-backward invariant. 
\end{exm}

Under the same condition of rational semigroup, we prove that Fatou set is S-backward invariant  and Julia set is S-forward invariant  if $ S $ is a transcendental semigroup.
\begin{theorem}\label{bc1}
The Fatou set $ F(S) $ is S-backward invariant and Julia set $ J(S)  $ is S-forward invariant if   $ S $ is an abelian transcendental semigroup. 
\end{theorem}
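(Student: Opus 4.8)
The plan is to run the argument of Theorem \ref{bc} in the transcendental setting, reducing everything to backward invariance of the Fatou set. Indeed, once I establish $g^{-1}(F(S)) \subset F(S)$ for every $g \in S$, forward invariance of the Julia set comes for free: each $g \in S$ is entire, so $g(z)$ is always defined, and $g(J(S)) \subset J(S)$ is precisely the contrapositive of the implication $g(z) \in F(S) \Rightarrow z \in F(S)$. So I would concentrate all the effort on the backward-invariance statement.

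First I would fix $g \in S$ and a point $z_0$ with $g(z_0) \in F(S)$, aiming to show that $S$ is normal on a neighborhood of $z_0$. By Definition \ref{2ab} there is a neighborhood $V$ of $g(z_0)$ on which $S$ is normal, and by continuity of the entire map $g$ I would pick a neighborhood $W$ of $z_0$ with $g(W) \subset V$. Given an arbitrary sequence $(h_n)$ in $S$, normality on $V$ produces a subsequence $h_{n_j} \to \psi$ locally uniformly on $V$, where $\psi$ is holomorphic or the constant $\infty$. The abelian hypothesis enters exactly as in Theorem \ref{bc}: since all generators commute, any two elements of $S$ commute, so $g \circ h_{n_j} = h_{n_j} \circ g$. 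For $z \in W$ one has $g(z) \in V$, hence $h_{n_j}(g(z)) \to \psi(g(z))$ locally uniformly on $W$, and commutativity rewrites this as $g(h_{n_j}(z)) \to \psi(g(z))$ locally uniformly on $W$.

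At this point the rational proof simply declares victory, because there $g$ extends to a continuous self-map of $\mathbb{C}_{\infty}$ and the value $\infty$ is an ordinary point, so control of $g \circ h_{n_j}$ transfers to control of $h_{n_j}$ with no extra work. In the transcendental case this is exactly where the difficulty lies, and I expect it to be the main obstacle. The map $g$ has an essential singularity at $\infty$, so two things break: when $\psi \equiv \infty$ the composite $g \circ h_{n_j}$ carries essentially no information about $h_{n_j}$, since $g$ cannot be evaluated at $\infty$; and even when $\psi$ is finite, recovering normality of $(h_{n_j})$ from that of $(g \circ h_{n_j})$ amounts to inverting $g$.

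To overcome this I would argue locally. On the part of $W$ where the finite limit $\psi \circ g$ avoids the critical and asymptotic values of $g$, the map $g$ is locally biholomorphic, and I would lift the locally uniform convergence of $g \circ h_{n_j}$ through a local inverse branch of $g$ to recover locally uniform convergence of $(h_{n_j})$ itself; since the critical values of $g$ are discrete and its asymptotic values form a small set, normality should extend across the exceptional points by a standard normal-families argument. The remaining case $\psi \equiv \infty$ is the genuinely delicate one, because $g \circ h_{n_j} \to \infty$ retains almost no information about $h_{n_j}$; here I would have to bring in the structure of the escaping set together with the description $J(S) = \overline{\bigcup_{f \in S} J(f)}$ from Theorem \ref{perf1} in order to rule out non-normality of $(h_{n_j})$ near $z_0$. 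Assembling the cases would show that every sequence in $S$ is normal near $z_0$, so that $z_0 \in F(S)$, establishing backward invariance of $F(S)$ and with it forward invariance of $J(S)$.
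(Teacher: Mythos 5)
Your overall strategy is the same as the paper's (the paper's proof of Theorem~\ref{bc1} is literally ``similar to Theorem~\ref{bc}''): use commutativity to convert normality of $S$ near $g(z_0)$ into locally uniform convergence of $g\circ h_{n_j}=h_{n_j}\circ g$ near $z_0$, and obtain forward invariance of $J(S)$ as the contrapositive of $g^{-1}(F(S))\subset F(S)$. You have also correctly isolated the step that the paper elides --- passing from control of $(g\circ h_{n_j})$ near $z_0$ to normality of $(h_{n_j})$ itself --- but neither of your proposed repairs closes it. The inverse-branch argument does not work as stated: for transcendental $g$ the fibre $g^{-1}(\psi(g(z_0)))$ is an infinite discrete set, so even away from singular values there is no single branch of $g^{-1}$ that all the $h_{n_j}$ follow (with $g(w)=e^{w}$ and $g\circ h_{n_j}\to c\neq 0$, the values $h_{n_j}(z_0)$ may sit near $\log c+2\pi i k_j$ with $k_j$ arbitrary), and for a general entire $g$ the asymptotic values need not form a small set at all --- they can be all of $\mathbb{C}$. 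The case $\psi\equiv\infty$, which you yourself flag as the delicate one, is left as an intention (``I would have to bring in the structure of the escaping set\dots'') rather than an argument; nothing in Theorem~\ref{perf1} obviously rules out non-normality of $(h_{n_j})$ there. As written, the proof is therefore incomplete.

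The missing step has an elementary fix that handles both cases at once, via Montel. Fix a closed disk $\overline{D}\subset W$ around $z_0$, so $g(\overline{D})$ is a compact subset of $V$. If $\psi$ is holomorphic, then for large $j$ and all $w\in\overline{D}$ one has $|g(h_{n_j}(w))|=|h_{n_j}(g(w))|\le M$ for some fixed $M$, so $h_{n_j}(D)$ lies in the sublevel set $\{|g|\le M\}$ and omits the non-empty open set $\{|g|>M\}$ (non-empty because a non-constant entire $g$ is unbounded). If $\psi\equiv\infty$, then for large $j$ the image $h_{n_j}(D)$ lies in $\{|g|>M\}$ and omits the non-empty open set $\{|g|<M\}$. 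Either way the tail of $(h_{n_j})$ omits more than two values on $D$, hence is normal there by Montel's theorem, and a further subsequence converges (to a holomorphic function or to $\infty$) near $z_0$, giving $z_0\in F(S)$. Some such argument (or, under the finite-type hypothesis, an appeal to $J(S)=J(f)$ as in Theorem~\ref{abe1}) is needed to make the proof complete; your diagnosis of where the difficulty lies is accurate, but the repair is not carried out.
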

\begin{proof}
The proof is similar to theorem \ref{bc}. 
\end{proof}

In {\cite [Theorem 2.1]{kum1}}, Dinesh Kumar and Sanjay Kumar provided the following condition for backward invariance of $ I(S) $. Here, we give another proof based on our definition \ref{2ab} of escaping set.
\begin{theorem}\label{1d}
The escaping set $ I(S) $ of transcendental semigroup $ S $ is S-backward invariant if  $ S $ is an abelian transcendental semigroup. 
\end{theorem}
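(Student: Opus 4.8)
The plan is to exploit commutativity exactly as in the proof of Theorem \ref{bc}, but phrased through the defining property of the escaping set in Definition \ref{2ab}: a point $w$ lies in $I(S)$ precisely when $h^{n}(w)\to\infty$ as $n\to\infty$ for \emph{every} $h\in S$. Backward invariance, by definition, asks us to show that $f^{-1}(I(S))\subset I(S)$ for every $f\in S$, i.e.\ that $f(z)\in I(S)$ forces $z\in I(S)$. So I would fix an arbitrary $f\in S$, assume $f(z)\in I(S)$, and aim to verify that $g^{n}(z)\to\infty$ for every $g\in S$, which is exactly the assertion $z\in I(S)$.

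First I would record the elementary but essential algebraic fact that, since the generators of $S$ commute pairwise, every pair of elements of $S$ commutes (any two elements are finite compositions of generators, and one may swap adjacent generators one at a time); in particular $f$ commutes with $g^{n}$ for all $g\in S$ and all $n\in\mathbb{N}$. Next, from $f(z)\in I(S)$ I read off, specializing the criterion above to the element $g$, that $g^{n}(f(z))\to\infty$ as $n\to\infty$. Using $g^{n}\circ f=f\circ g^{n}$ this rewrites as $f\bigl(g^{n}(z)\bigr)\to\infty$.

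The crux is then to pass from $f\bigl(g^{n}(z)\bigr)\to\infty$ back to $g^{n}(z)\to\infty$, that is, to invert $f$ qualitatively without ever touching the (possibly very complicated) preimage set $f^{-1}$. I would handle this by a compactness argument rather than by preimages: if $g^{n}(z)$ failed to tend to $\infty$, then some subsequence would remain in a closed disk $\overline{D(0,R)}$ and hence, by Bolzano--Weierstrass, admit a further subsequence converging to a finite point $w_{*}\in\mathbb{C}$; continuity of the entire map $f$ would then give $f$ of that sub-subsequence $\to f(w_{*})\in\mathbb{C}$, contradicting $f\bigl(g^{n}(z)\bigr)\to\infty$. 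Hence $g^{n}(z)\to\infty$, and since $g\in S$ was arbitrary, $z\in I(S)$, proving $f^{-1}(I(S))\subset I(S)$ for the fixed $f$, and then for all $f\in S$.

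I expect the only genuinely delicate point to be this final inversion step, precisely because for a transcendental entire function the preimage of a neighbourhood of $\infty$ is \emph{not} a neighbourhood of $\infty$, so one cannot argue directly on preimages as one would for a polynomial; it is the replacement of preimages by the continuity/compactness argument that lets the abelian hypothesis carry the whole proof. The commutativity ingredient itself is routine, being the same fact already invoked for Theorems \ref{bc} and \ref{bc1}, and I would expect no further obstacle beyond keeping the quantifiers straight (one fixed $f$, all $g\in S$).
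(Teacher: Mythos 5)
Your proof is correct. The paper reaches the same conclusion from the same single ingredient --- commutativity of all elements of $S$ --- but organizes the logic in the opposite direction: it proves the contrapositive, assuming $z\notin I(S)$, taking an $f\in S$ whose orbit of $z$ has a bounded subsequence, and pushing that bounded subsequence forward through the continuous map $g$ via the identity $f^{n}(g(z))=g\bigl(f^{n}(z)\bigr)$ to conclude $g(z)\notin I(S)$. That direction needs only the trivial fact that a continuous map sends bounded sequences to bounded sequences, so no qualitative inversion of any element of $S$ is required. Your direct version applies commutativity to the other factor, $g^{n}\circ f=f\circ g^{n}$, and therefore does need the Bolzano--Weierstrass step to pass from $f\bigl(g^{n}(z)\bigr)\to\infty$ back to $g^{n}(z)\to\infty$; you correctly identify this as the one delicate point, and your compactness argument settles it, since $f$ is entire and hence continuous and finite-valued on all of $\mathbb{C}$. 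On balance your write-up is cleaner on quantifiers than the paper's, which detours through sequences in $S$ ``containing $f$'' and their subsequences in a way that is never fully pinned down, while the paper's contrapositive route is the more economical of the two because it dispenses with the inversion step entirely. The essential content of both proofs is the same.
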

\begin{proof}
We prove that if $ g(z) \in I(S) $, then $ z \in I(S) $ for all $ g\in S$. This follows that $ g^{-1}(I(S)) \subset I(S) $ for all $ g \in S $.  This will be proved if we are able to prove its contrapositive statement:  if $ z\notin I(S) $, then  $ g(z) \notin I(S) $ for all $ g \in S $. Let $ z\notin I(S) $ there is some $ f\in S $  which is iteratively bounded at $ z $. That is, $ f^{n}(z)\nrightarrow\infty  $ as $ n\to \infty $.  In this case,  there exists a sequence $ (f_{k})_{k \in \mathbb{N}} $ in $ S $ containing $ f $  which is  iteratively bounded at $ z $ and all subsequences of this sequence containing $ f $ are also iteratively bounded at $ z $. Now,  for any $ g\in S $,   $ (f_{k}\circ g)_{k \in \mathbb{N}} $ is a sequence in $ S $.  Since $ S $ is abelian and $ g $ is a transcendental entire function, so by the continuity of $ g $ at  $ z \in \mathbb{C} $, we can write that $(f_{k}\circ g)(z) = (g\circ f_{k})(z) $ for all $ k \in \mathbb{N} $. From which it follows that the sequence $ (f_{k}\circ g)_{k \in \mathbb{N}} $ is iteratively bounded at $ z $. So, all subsequence of this sequences are iteratively bounded at $ z $. From the fact  $(f_{k}\circ g)(z) = (g\circ f_{k})(z) $ for all $ k \in \mathbb{N} $, we can say that all subsequences of the sequence $ (f_{k}\circ g)_{k \in \mathbb{N}} $ are  iteratively bounded at $ g(z) $. That is, $ g(z)\notin I(S) $ for all $ g \in S $. 
 Therefore, $ g^{-1}(I(S)) \subset I(S) $ for all $ g \in S $. This proves that $ I(S) $ is backward invariant.
\end{proof}

From the result of above theorems \ref{bc1} and  \ref{1d}, we can conclude that Fatou set $ F(S) $, Julia set $ J(S) $ and escaping set $ I(S) $ are S-completely invariant if $ S $ is an abelian transcendental semigroup.  For example, the following semigroups 
\begin{enumerate}
\item $\langle z +\gamma \sin z, \; z +\gamma \sin z + 2k \pi \rangle$,  
\item $\langle z +\gamma \sin z, \; -z -\gamma \sin z + 2k \pi \rangle$,
\item $\langle z + \gamma e^{z}, \; z + \gamma e^{z} + 2k \pi i \rangle$, 
\item  $\langle z - \sin z, \; z - \sin z + 2\pi \rangle$, 
\item $\langle e^{\gamma z}, \; e^{\gamma z + \frac{2\pi i}{\gamma}}\rangle$, where $0<\gamma < e^{-1}$,  
\end{enumerate} 
are abelian transcendental  semigroups, so their Fatou set, Julia set and escaping sets are S-completely invariant. 

The theorems \ref{bc}, \ref{bc1} and \ref{1d} give a kind of connection between classical complex dynamics and semigroup dynamics. We got completely invariant  structure of  Fatou set, Julia set and escaping set in both classical complex dynamics and semigroup dynamics because of their  associated  abelian semigroups. This was our expectation that abelian semigroup (rational and transcendental) must exhibit same dynamical features as in classical complex dynamics.
Note that it will not better to  conclude that Fatou set, Julia set and escaping set can not be completely invariant unless the semigroup is abelian.

If $ S $ is a finitely generated rational semigroup, then Hiroki Sumi {\cite[Lemma 1.1.4 (2)]{sum}} proved the following results.
\begin{theorem}\label{fn2}
If $ S = \langle f_{1}, f_{2},\ldots, f_{n}\rangle $ is a finitely generated rational semigroup, then 
$ F(S) = \bigcap_{i=1}^{n} f_{i}^{-1}(F(S))$  and $ J(S) = \bigcup_{i=1}^{n} f_{i}^{-1}(J(S))$ 
\end{theorem}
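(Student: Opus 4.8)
The plan is to exploit the fact that the two displayed identities are complementary, so that proving one yields the other. Since each generator $f_i$ is a rational self-map of $\mathbb{C}_{\infty}$, the sets $f_i^{-1}(F(S))$ and $f_i^{-1}(J(S))$ partition $\mathbb{C}_{\infty}$; taking complements gives $\bigcap_{i=1}^{n} f_i^{-1}(F(S)) = \mathbb{C}_{\infty}\setminus \bigcup_{i=1}^{n} f_i^{-1}(J(S))$, while $F(S)=\mathbb{C}_{\infty}\setminus J(S)$. Hence the Fatou identity holds if and only if the Julia identity does, and I will establish the Fatou identity by proving its two inclusions. The inclusion $F(S)\subseteq \bigcap_{i=1}^{n} f_i^{-1}(F(S))$ is immediate from forward invariance (Theorem \ref{fi}): since each $f_i\in S$ we have $f_i(F(S))\subseteq F(S)$, and applying $f_i^{-1}$ gives $F(S)\subseteq f_i^{-1}(F(S))$ for every $i$. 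Dually this is the same statement as $\bigcup_{i} f_i^{-1}(J(S))\subseteq J(S)$, which is just backward invariance of $J(S)$.

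The reverse inclusion $\bigcap_{i=1}^{n} f_i^{-1}(F(S))\subseteq F(S)$ is the substantive part, and it amounts to the implication: if $f_i(z)\in F(S)$ for every $i=1,\dots,n$, then $z\in F(S)$. To prove this I would first manufacture a single open set on which all generators behave well. As each $f_i$ is continuous and $F(S)$ is open, the set $V:=\bigcap_{i=1}^{n} f_i^{-1}(F(S))$ is an open neighborhood of $z$ satisfying $f_i(V)\subseteq F(S)$ for every $i$. It then suffices to show that $S$ is normal on $V$, for this forces $z\in V\subseteq F(S)$.

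To verify normality on $V$, take an arbitrary sequence $(g_k)$ in $S$. By the construction of $S$ (cf.\ Proposition \ref{ts1}) each $g_k$ is a finite composition of generators, so, isolating the first-applied (innermost) generator, either $g_k$ is itself a generator or $g_k=h_k\circ f_{j(k)}$ with $h_k\in S$ and $j(k)\in\{1,\dots,n\}$. Because there are only finitely many generators, the pigeonhole principle lets me pass to a subsequence along which $j(k)$ equals a fixed index $i$; this is the one place where finite generation is essential. If infinitely many of these $g_k$ coincide with the generator $f_i$, the corresponding subsequence is constant and converges. Otherwise $g_k=h_k\circ f_i$ with $h_k\in S$, and since $f_i(V)\subseteq F(S)$, where $S$ is normal, the sequence $(h_k)$ admits a subsequence $(h_{k_m})$ converging locally uniformly on $f_i(V)$, in the spherical metric, to a limit $\phi$ that is either holomorphic or identically $\infty$. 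A composition estimate then gives $g_{k_m}=h_{k_m}\circ f_i\to \phi\circ f_i$ locally uniformly on $V$: for compact $K\subseteq V$ the image $f_i(K)$ is a compact subset of $F(S)$, and uniform convergence of $h_{k_m}$ on $f_i(K)$ transfers verbatim to uniform convergence of $g_{k_m}$ on $K$. Thus every sequence in $S$ has a locally uniformly convergent subsequence on $V$, so $S$ is normal on $V$.

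The main obstacle is precisely this normality-transfer step, and within it the reduction to a single innermost generator through finiteness of the generating set, which is what makes the argument work for finitely generated $S$ but not in general. The composition estimate itself is routine once one works in the spherical metric, since that handles the degenerate case $\phi\equiv\infty$ uniformly alongside the holomorphic case. Once both inclusions are in hand, the Julia identity $J(S)=\bigcup_{i=1}^{n} f_i^{-1}(J(S))$ follows at once by the complementation observed at the outset.
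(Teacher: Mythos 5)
Your proof is correct. Note first that the paper does not actually prove this theorem: it is quoted from Sumi, and the paper only supplies a proof for the transcendental analogue (Theorem \ref{fn3}). Measured against that proof, your argument is structurally the same at its core --- the easy inclusion comes from $S$-forward invariance of $F(S)$, and the substantive inclusion comes from peeling off the innermost generator and using that there are only finitely many of them, which is exactly the role of the paper's decomposition $S=\bigcup_{i=1}^{n}(S\circ f_{i})$ --- but you execute it differently: the paper argues via equicontinuity of $S$ at the points $w_{i}=f_{i}(z_{0})\in F(S)$ and transfers it back through the $f_{i}$, whereas you extract a locally uniformly convergent subsequence of $(h_{k})$ on $f_{i}(V)$ and compose with $f_{i}$, working in the spherical metric so that the limit $\phi\equiv\infty$ is covered. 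Your version is the more careful one on two points: you explicitly dispose of the case where an element of $S$ is a bare generator (the identity $S=\bigcup_{i}(S\circ f_{i})$ used in the paper quietly omits these single-letter words, since a semigroup need not contain an identity), and your subsequence formulation avoids the paper's implicit claim of a single $\delta$ working uniformly over all $g\in S$, which is really the normality of $S$ at $w_{i}$ restated. The complementation step relating the $F(S)$ identity to the $J(S)$ identity is the same in both.
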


In the case of finitely generated transcendental semigroup, we prove following result which is analogous to above theorem \ref{fn2}.
\begin{theorem}\label{fn3}
If $ S = \langle f_{1}, f_{2},\ldots, f_{n}\rangle $ is a finitely generated transcendental semigroup, then 
$ F(S) = \bigcap_{i=1}^{n} f_{i}^{-1}(F(S))$  and $ J(S) = \bigcup_{i=1}^{n} f_{i}^{-1}(J(S))$ 
\end{theorem}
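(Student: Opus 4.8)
The plan is to prove the two set equalities for a finitely generated transcendental semigroup $S=\langle f_1,\ldots,f_n\rangle$ by combining the forward invariance of $F(S)$ (Theorem \ref{fi1}), a symmetric double-inclusion argument for the Fatou identity, and then passing to complements for the Julia identity. Since $J(S)=\mathbb{C}\setminus F(S)$ and $f_i^{-1}(J(S))=\mathbb{C}\setminus f_i^{-1}(F(S))$, the two statements are formally equivalent: taking complements in $F(S)=\bigcap_{i=1}^n f_i^{-1}(F(S))$ turns the intersection into a union and yields $J(S)=\bigcup_{i=1}^n f_i^{-1}(J(S))$ via De Morgan. So the entire content lies in establishing the Fatou identity, and I would focus all the work there.

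First I would prove the inclusion $F(S)\subset\bigcap_{i=1}^n f_i^{-1}(F(S))$. By Theorem \ref{fi1}, $F(S)$ is $S$-forward invariant, so $f_i(F(S))\subset F(S)$ for each generator $f_i\in S$. This immediately gives $F(S)\subset f_i^{-1}(f_i(F(S)))\subset f_i^{-1}(F(S))$ for every $i$, and intersecting over $i=1,\ldots,n$ delivers one inclusion. This direction is essentially free once forward invariance is invoked.

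Next I would prove the reverse inclusion $\bigcap_{i=1}^n f_i^{-1}(F(S))\subset F(S)$, which is the substantive step. Let $z$ lie in every $f_i^{-1}(F(S))$, so that each $f_i(z)\in F(S)$; choose a common open neighborhood $V$ of $z$ small enough that $f_i(V)\subset F(S)$ for all $i$ simultaneously (possible by continuity and finiteness of the generating set). To show $S$ is normal on $V$, take any sequence $(g_k)$ in $S$. By Proposition \ref{ts1}, each $g_k$ factors as a finite composition of generators, so $g_k=h_k\circ f_{i(k)}$ for some generator $f_{i(k)}$ and some $h_k\in S\cup\{\mathrm{id}\}$; since there are only finitely many generators, infinitely many $g_k$ share a common last factor $f_i$, and on $V$ these maps act on $f_i(V)\subset F(S)$, where normality of $S$ already holds. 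Extracting the appropriate normal subsequence on the compact image and composing with the (uniformly continuous on compacta) map $f_i$ yields a subsequence of $(g_k)$ converging locally uniformly or diverging to $\infty$ on $V$, so $S$ is normal at $z$ and $z\in F(S)$.

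The main obstacle will be the reverse inclusion, specifically the bookkeeping that every element of $S$ can be peeled to expose one of the finitely many generators as its innermost (rightmost) factor, and then transferring normality through that generator while keeping the neighborhood $V$ uniform across all $n$ generators. The transcendental setting adds a subtlety absent in the rational case of Theorem \ref{fn2}: the generators are entire but not defined at $\infty$, so the ``diverges uniformly to $\infty$'' alternative in the definition of normality must be handled together with the locally-uniform-convergence alternative when pushing a normal family forward through $f_i$. Once this pigeonhole-plus-composition argument is made precise, the Julia identity follows purely formally by complementation as noted above.
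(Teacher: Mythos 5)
Your proof is correct, and its skeleton matches the paper's: one inclusion from forward invariance (Theorem \ref{fi1}), the reverse inclusion from the decomposition of $S$ by innermost generator, and De Morgan for the Julia identity. The difference is in how normality is transferred from the points $w_i=f_i(z_0)$ back to $z_0$. The paper runs an $\epsilon$--$\delta$ equicontinuity chain: every $g\in S$ is equicontinuous at $w_i$, each $f_i$ is continuous at $z_0$, and since $S=\bigcup_{i}(S\circ f_i)$ the composites $g\circ f_i$ are equicontinuous at $z_0$. You instead work directly with the subsequence-extraction definition of normality: pigeonhole a given sequence $(g_k)$ onto a common innermost generator $f_i$, extract a locally uniformly convergent (or divergent) subsequence on $f_i(V)\subset F(S)$, and precompose with $f_i$. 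The two routes buy different things. The paper's is shorter on the page, but its equicontinuity estimate in the Euclidean metric does not account for the ``diverges uniformly to $\infty$'' alternative in the definition of normality (that case requires the spherical metric or a separate argument), whereas your version handles both alternatives explicitly and is the more watertight of the two. The only bookkeeping worth adding is that strictly $S=\{f_1,\dots,f_n\}\cup\bigcup_{i}(S\circ f_i)$, since a generator has no proper innermost factor --- which you already cover by allowing $h_k=\mathrm{id}$.
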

\begin{proof}
The Fatou set $ F(S)  $ is S-forward invariant in general (theorem \ref{fi1}). So, $ f_{i}(F(S))\subset F(S) $ and it follows $ F(S) \subset \bigcap_{i =1}^{n} f^{-1}_{i}(F(S)) $ for all $ i $. 

Next, let $ z_{0} \in \bigcap_{i =1}^{n} f^{-1}_{i}(F(S)) $.  Then (say) $ w_{i}= f_{i}(z_{0}) \in F(S)$ for all $ i $. The semigroup $ S $ is normal at $ w_{i} $ for all $ i $. In other words, every $ g \in S $ is equicontinuous at $ w_{i} $ for all $ i $. So, for any $ \epsilon >0 $, there is $ \delta > 0  $ such that 
$$
d(g(w), g(w_{i})) < \epsilon, \; \text{whenever}\; \;  d(w, w_{i}) < \delta
$$
for all $ w \in F(S) $ and $ i =1,2,3,\ldots, n $. Where $ d $ represents Euclidean metric on $ \mathbb{C} $. For such $ \delta $, there is  $ \eta > 0$  such that 
$$
d(f_{i}(z), f_{i}(z_{0})) < \delta, \; \text{whenever}\; \;  d(z, z_{0}) < \eta
$$
for all  $ z \in \bigcap_{i =1}^{n} f^{-1}_{i}(F(S)) $ and $ i =1,2,3,\ldots, n $. Thus, ultimately, we conclude that 
$$
d(g(f_{i}(z)), g(f_{i}(z_{0})))<\epsilon \;\;  \text{whenever}\; \;  d(z, z_{0}) < \eta
$$
for all  $ z \in \bigcap_{i =1}^{n} f^{-1}_{i}(F(S)) $ and $ i =1,2,3,\ldots, n $. Here $ S = \bigcup_{i =1}^{n} (S \circ f_{i}) $. So $ S $ is equicontinuous at $ z_{0} $. That is, $ z_{0} \in F(S)$. Hence, $ F(S) = \bigcap_{i=1}^{n} f_{i}^{-1}(F(S))$. 
Second part of the theorem easily follows as
$$
J(S) = \mathbb{C} - F(S) = \mathbb{C} - \bigcap_{i=1}^{n} f_{i}^{-1}(F(S)) = \bigcup_{i =1}^{n}(\mathbb{C} -f_{i}^{-1}(F(S)) = \bigcup_{i =1}^{n}f_{i}^{-1}(J(S)).
$$
\end{proof}

The relation $ J(S) = \bigcup_{i=1}^{n} f_{i}^{-1}(J(S))$ for Julia set is called \textit{backward self similarity}. With this property, dynamics of semigroup can be regarded as \textit{backward iterated function systems}.

The theorems \ref{fi} and \ref{1d} can be used directly to express the escaping set $ I(S) $ as a finite intersection of all pre-images of itself under the generators of the semigroup  $ S $ if it is finitely generated.
The following result is due to Dinesh Kumar and Sanjay Kumar {\cite[Theorem 2.6]{kum1}}. Here, we give an alternative proof. 
\begin{theorem}\label{fn1}
If $ S = \langle f_{1}, f_{2},\ldots, f_{n}\rangle $ is a finitely generated transcendental semigroup, then 
$ I(S) = \bigcap_{i=1}^{n} f_{i}^{-1}(I(S))$ if $ S $ is abelian semigroup. 
\end{theorem}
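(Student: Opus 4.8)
The plan is to prove the set equality $I(S) = \bigcap_{i=1}^{n} f_i^{-1}(I(S))$ by establishing the two inclusions separately, each following directly from one of the invariance properties of the escaping set already proved above. Unlike the proof of Theorem \ref{fn3}, no equicontinuity argument is needed here, because the escaping set is characterized through iterative divergence rather than through normality; its forward invariance and (under the abelian hypothesis) its backward invariance are precisely the two ingredients required.

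For the inclusion $I(S) \subset \bigcap_{i=1}^{n} f_i^{-1}(I(S))$, I would invoke Theorem \ref{fi2}, which asserts that $I(S)$ is $S$-forward invariant, so that $f_i(I(S)) \subset I(S)$ for each generator $f_i$. Consequently, if $z \in I(S)$ then $f_i(z) \in I(S)$, that is, $z \in f_i^{-1}(I(S))$, for every $i = 1, 2, \ldots, n$. Since this holds for each $i$, we obtain $I(S) \subset \bigcap_{i=1}^{n} f_i^{-1}(I(S))$. I emphasize that this inclusion uses only forward invariance and therefore holds even without the abelian hypothesis.

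For the reverse inclusion $\bigcap_{i=1}^{n} f_i^{-1}(I(S)) \subset I(S)$, I would use Theorem \ref{1d}, which guarantees that $I(S)$ is $S$-backward invariant precisely because $S$ is abelian; that is, $f_i^{-1}(I(S)) \subset I(S)$ for each $i$. Since the finite intersection $\bigcap_{i=1}^{n} f_i^{-1}(I(S))$ is contained in each individual set $f_i^{-1}(I(S))$, it is in particular contained in $I(S)$. Combining the two inclusions yields the asserted equality.

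The argument is essentially immediate once the invariance theorems are in hand, so there is no serious analytic obstacle; the single point that must be handled with care is the role of the abelian hypothesis. It enters only in the reverse inclusion, through the backward invariance of Theorem \ref{1d}, whereas the forward inclusion is unconditional. This asymmetry is the conceptual heart of the statement and mirrors the general failure of backward invariance for non-abelian semigroups witnessed in Example \ref{efi}.
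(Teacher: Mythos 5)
Your proposal is correct and follows essentially the same route as the paper: forward invariance of $I(S)$ (Theorem \ref{fi2}) gives $I(S) \subset \bigcap_{i=1}^{n} f_i^{-1}(I(S))$, and backward invariance under the abelian hypothesis (Theorem \ref{1d}) gives the reverse inclusion. Your added observation that only the reverse inclusion needs the abelian hypothesis is accurate and is in fact the point the paper itself makes in the remark immediately following its proof.
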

\begin{proof}
$I(S)$ is forward invariant in general and under the assumption given in the statement,  it is also backward invariant (see for instance theorem \ref{1d}). So, we have $ f_{i}(I(S))\subset I(S) $ and $ f^{-1}_{i}(I(S))\subset I(S) $ for all $ 1\leq i \leq n $. From which we get respectively $ I(S) \subset \bigcap_{i =1}^{n} f^{-1}_{i}(I(S)) $ and $ I(S) \supset \bigcap_{i =1}^{n} f^{-1}_{i}(I(S)) $ for all $ 1\leq i \leq n $. Thus,  we get $ I(S) = \bigcap_{i=1}^{n} f_{i}^{-1}(I(S))$. 
\end{proof}

Later, we found that the condition of abelian in semigroup $ S $ is not required to hold essence of above theorem \ref{fn1}. For $ I(S) \subset \bigcap_{i =1}^{n} f^{-1}_{i}(I(S)) $ as above. Next, 
let $ z_{0} \in \bigcap_{i =1}^{n} f^{-1}_{i}(I(S)) $.  Then (say) $ w_{i}= f_{i}(z_{0}) \in I(S)$ for all $ i $. By the definition \ref{2ab}, semigroup $ S $ is iteratively divergent at $ w_{i} $ for all $ i $. By the theorem \ref{ad1}, every sequence $ (f_{j})_{j \in \mathbb{N}} $ in $ S $ has a subsequence $ (f_{j_{k}})_{k \in \mathbb{N}} $ which diverges to $ \infty $ at $ w_{i} = f_{i}(z_{0})  $. This proves that the sequence  $(f_{j_{k}} \circ f_{i})_{k, i \in \mathbb{N}} $ diverges to $ \infty $ at $ z_{0} $. Hence the result.

\section{Further results in abelian semigroup dynamics}
We have some further expectations that semigroup dynamics behave just like classical complex dynamics if  corresponding semigroup is abelian. We see more specific results in abelian semigroup dynamics. This  also exhibits a nice connection between classical complex dynamics and semigroup dynamics.
\begin{theorem}\label{abe}
Let $ S $ be an abelian rational  semigroup. Then $ J(S) = J(f) $ for all $ f \in S $ of degree at least two.
\end{theorem}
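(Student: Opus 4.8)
The plan is to prove the two inclusions $J(f)\subseteq J(S)$ and $J(S)\subseteq J(f)$ separately, with the second (harder) one resting on the classical theory of commuting rational maps together with the decomposition $J(S)=\overline{\bigcup_{g\in S}J(g)}$ supplied by Theorem \ref{perf}. The first inclusion is free: since $f\in S$, Theorem \ref{1c}(2) gives $J(f)\subseteq J(S)$ with no extra hypothesis. So the entire content lies in the reverse inclusion.

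The first thing I would record is that the abelian condition on the \emph{generators} propagates to the whole semigroup. Indeed, every element of $S$ is a finite composition of generators, and if the generators commute pairwise then any two such compositions may be rearranged into one another; hence $S$ is a commutative semigroup, and in particular the fixed map $f$ (of degree at least two) commutes with every $g\in S$. The next step is to deduce $J(g)\subseteq J(f)$ for each $g\in S$. For the Möbius elements (degree one) this is trivial because their Julia set is empty, so they contribute nothing to the union. For an element $g$ with $\deg g\geq 2$ I would invoke the classical commutation principle of Fatou \cite{fat} and Julia \cite{jul}: two commuting rational maps of degree at least two share the same Julia set, so $J(g)=J(f)$. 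Consequently $J(g)\subseteq J(f)$ holds for \emph{every} $g\in S$.

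With this in hand the conclusion is immediate. Applying Theorem \ref{perf}, $J(S)=\overline{\bigcup_{g\in S}J(g)}\subseteq\overline{J(f)}=J(f)$, the last equality because $J(f)$ is closed. Combined with $J(f)\subseteq J(S)$ this yields $J(S)=J(f)$, as required.

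The main obstacle is the commutation step $J(g)\subseteq J(f)$, which is exactly where both hypotheses (abelianness, and degree at least two) are consumed. If I did not wish to quote it as a black box, I would prove the underlying lemma that $F(f)$ is completely invariant under any $g$ commuting with $f$: given $z\in F(f)$ and a neighborhood $U$ of $z$ on which $\{f^n\}$ is normal, any subsequence admits a sub-subsequence $f^{n_k}\to h$ locally uniformly on $U$, whence $g\circ f^{n_k}\to g\circ h$; using $g\circ f^{n_k}=f^{n_k}\circ g$ and the open mapping theorem, normality of $\{f^n\}$ transfers to the neighborhood $g(U)$ of $g(z)$, giving $g(F(f))\subseteq F(f)$, and the backward statement follows analogously. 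Thus $J(f)$ is a closed, infinite, completely $g$-invariant set, so the minimality of $J(g)$ (the smallest such set, for $\deg g\geq 2$) forces $J(g)\subseteq J(f)$; the reverse inclusion is symmetric. I expect verifying this transfer of normality across the critical points of $g$ to be the only genuinely delicate point, all other steps being formal.
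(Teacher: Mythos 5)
Your proposal is correct and follows essentially the same route as the paper: the Fatou--Julia commutation lemma (Lemma \ref{per}) yields $J(g)=J(f)$ for every $g\in S$ of degree at least two, and the decomposition $J(S)=\overline{\bigcup_{g\in S}J(g)}$ of Theorem \ref{perf}, together with the trivial inclusion $J(f)\subseteq J(S)$, closes the argument. You are in fact slightly more careful than the paper in flagging the degree-one elements and in sketching a proof of the commutation lemma itself, though note that under the normality definition a loxodromic M\"obius map has a one-point rather than empty Julia set, so that case deserves a word more than ``trivial'' --- a gap the paper's own proof shares, since it applies Lemma \ref{per} to all generators without checking their degrees.
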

We recall the following result of Fatou \cite{fat1} and Julia \cite{jul1} concerning commuting rational functions.
\begin{lem} \label{per}
Let $ f $ and $ g $ be two rational functions of degree at least two such that $ f \circ g =g \circ f $. Then $ J(f) =J(g) $.
\end{lem}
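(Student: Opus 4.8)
The plan is to show that each of the two maps carries the Fatou set of the other into itself; equivalently, that each Julia set is backward invariant under the other map. Once $g^{-1}(J(f)) \subseteq J(f)$ is established, I will invoke the classical minimality of the Julia set --- namely that $J(g)$ is contained in every closed, backward $g$-invariant set having at least three points --- to conclude $J(g) \subseteq J(f)$. The reverse inclusion then follows by interchanging the roles of $f$ and $g$, which is legitimate since the commutation relation $f\circ g = g\circ f$ is symmetric and both maps have degree at least two. Throughout I work on $\mathbb{C}_\infty$ with the spherical metric $\sigma$, with respect to which every rational map is uniformly continuous and open.

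The heart of the argument is the claim that $g(F(f)) \subseteq F(f)$. Iterating the hypothesis gives $f^n \circ g = g \circ f^n$ for every $n$, so for any point $w$ one has $f^n(g(w)) = g(f^n(w))$. Fix $z_0 \in F(f)$; since $F(f)$ is precisely the set of equicontinuity of $\{f^n\}$ on the compact sphere, $\{f^n\}$ is equicontinuous at $z_0$. Given $\varepsilon>0$, I first use uniform continuity of $g$ to pick $\varepsilon'$ with $\sigma(a,b)<\varepsilon' \Rightarrow \sigma(g(a),g(b))<\varepsilon$; then equicontinuity at $z_0$ supplies $\delta'$ with $\sigma(w,z_0)<\delta' \Rightarrow \sigma(f^n(w),f^n(z_0))<\varepsilon'$ for all $n$; finally, since $g$ is an open map, $g(B(z_0,\delta'))$ contains a ball $B(g(z_0),\delta)$. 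For $\zeta \in B(g(z_0),\delta)$ I write $\zeta = g(w)$ with $w\in B(z_0,\delta')$, and then $\sigma(f^n(\zeta), f^n(g(z_0))) = \sigma(g(f^n(w)), g(f^n(z_0))) < \varepsilon$ for all $n$, so $\{f^n\}$ is equicontinuous at $g(z_0)$ and hence $g(z_0)\in F(f)$.

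Taking complements, $g(F(f))\subseteq F(f)$ is equivalent to $g^{-1}(J(f)) \subseteq J(f)$, so $J(f)$ is a closed, backward $g$-invariant set. By Theorem \ref{perf} (applied to the cyclic semigroup $\langle f\rangle$) it is perfect and nonempty, hence infinite, so it contains a point $w_0$ outside the exceptional set $E(g)$, which has at most two points. Backward invariance gives $g^{-n}(w_0) \subseteq J(f)$ for all $n$, and since classically $J(g) \subseteq \overline{\bigcup_n g^{-n}(w_0)} \subseteq \overline{J(f)} = J(f)$, we obtain $J(g)\subseteq J(f)$. Interchanging $f$ and $g$ yields $J(f)\subseteq J(g)$, whence $J(f)=J(g)$. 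I expect the main obstacle to be the equicontinuity transfer of the middle paragraph: because $g$ need not be injective, one cannot simply pull neighborhoods back through $g$, and the argument must instead combine openness of $g$ (to realize every nearby $\zeta$ as an image $g(w)$ with $w$ close to $z_0$) with its uniform continuity (to push the resulting smallness forward through $g$), a point that is easy to gloss over.
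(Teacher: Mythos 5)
Your proof is correct, but there is nothing in the paper to compare it against line by line: the paper does not prove Lemma \ref{per} at all, recalling it instead as a classical theorem of Fatou \cite{fat1} and Julia \cite{jul1} and using it as a black box in the proof of Theorem \ref{abe}. What you have written is, in substance, the classical argument from the literature on rational iteration, and both halves are sound: (i) commutativity gives $f^n\circ g=g\circ f^n$, and the combination of uniform continuity and openness of $g$ on the sphere transfers equicontinuity of $\{f^n\}$ from $z_0$ to $g(z_0)$, so $g(F(f))\subseteq F(f)$, equivalently $g^{-1}(J(f))\subseteq J(f)$; (ii) since $\deg f\geq 2$ forces $J(f)$ to be a nonempty perfect, hence infinite, closed set, it contains a point $w_0$ outside the at-most-two-point exceptional set $E(g)$, and the classical density of $\bigcup_n g^{-n}(w_0)$ in $J(g)$ for non-exceptional $w_0$ gives $J(g)\subseteq J(f)$; symmetry of the hypothesis then finishes. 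Both appeals to the degree hypothesis are genuinely needed and correctly placed. One step deserves one more sentence than you gave it: equicontinuity of $\{f^n\}$ at the single point $g(z_0)$ puts $g(z_0)$ in $F(f)$ only if one knows that the pointwise equicontinuity set of the iterates equals the Fatou set (true for rational maps, because the blow-up property shows no Julia point is a point of equicontinuity); alternatively, and with no extra input, observe that your argument applies to every $z_0\in F(f)$, so the set $g(F(f))$ --- open, because $g$ is an open map --- consists entirely of equicontinuity points and is therefore an open set of equicontinuity, hence contained in $F(f)$. Finally, it is worth noting that the difficulty you flag at the end (non-injectivity of $g$, handled by pulling $\zeta$ back through the open map and pushing forward by uniform continuity) is exactly the point that the paper's own proof of the related Theorem \ref{bc} glosses over: there, convergence of $g\circ f^{n_j}$ on a neighborhood of $g(z)$ is asserted to yield normality at $z$ with no justification, whereas your argument supplies precisely the missing mechanism.
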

\begin{proof}[Proof of the theorem  \ref{abe}]
Since semigroup $ S $ is abelian, so we have $ f_{i} \circ f_{j} = f_{j} \circ f_{i} $ for all generators $ f_{i} $ and $ f_{j} $ with $ i \neq j $. Then by above lemma \ref{per}, $ J(f_{i}) = J(f_{j}) $ for all $ i $ and $ j $ with $ i \neq j $. Also, every $ f \in S $ permutes with each generator $ f_{i} $ for all $ i $,  so again by the same lemma \ref{per}, $ J(f) = J(f_{i}) $ for all $ i $. This fact together with the fact of theorem \ref{perf}, we can conclude that $J(S) = J(f) $ for all $ f \in S $.
\end{proof}

The analogous result in transcendental semigroup may not hold in general because of the essence of above lemma \ref{per} is still unanswered for permutable transcendental entire functions. Julia sets for two permutable entire functions were studied in \cite{poo1, sing, tue} where we found certain conditions from which we can get the essence of above lemma \ref{per}. If we expose extra conditions in the statement, then result analogous to the theorem \ref{abe} holds in the case of transcendental semigroup. One of the analogous result of the theorem \ref{abe} was proved by K.K. Poon {\cite[Theorem 5.1]{poo}}. Here, we only give sketch of the proof similar to above  proof of the theorem \ref{abe}. 
\begin{theorem}\label{abe1}
Let $ S $ be an abelian transcendental semigroup in which each generator is of finite type. Then $ J(S) =J(f) $ for all $ f \in S $.
\end{theorem}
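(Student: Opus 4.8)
The plan is to follow the proof of Theorem \ref{abe} verbatim in structure, substituting for Lemma \ref{per} its transcendental counterpart valid under the finite-type hypothesis. Concretely, the engine of the argument is the statement: if $f$ and $g$ are transcendental entire functions of finite type with $f \circ g = g \circ f$, then $J(f) = J(g)$. Unlike the general permutable case (which, as noted above, remains open), this does hold once the functions are restricted to finite type, and it is precisely this result — drawn from the circle of ideas in \cite{poo1, sing, tue} and used by Poon in \cite[Theorem 5.1]{poo} — that the finite-type assumption in the hypothesis is there to supply.

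First I would record that since $S$ is abelian, any two generators $f_{\alpha}, f_{\beta}$ satisfy $f_{\alpha} \circ f_{\beta} = f_{\beta} \circ f_{\alpha}$; as both are of finite type, the commuting lemma gives $J(f_{\alpha}) = J(f_{\beta})$. Denote this common closed set by $J_{0}$. Next I would pass from the generators to an arbitrary $f \in S$. By construction every such $f$ is a finite composition of generators, and since the class of finite-type functions is closed under composition (the singular set of a composition is controlled by the singular sets of its factors, via $\operatorname{sing}((f\circ g)^{-1}) \subset \operatorname{sing}(f^{-1}) \cup f(\operatorname{sing}(g^{-1}))$), the map $f$ is itself of finite type. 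Moreover, abelianness of $S$ forces $f$ to commute with every generator $f_{\alpha}$. Applying the commuting lemma once more yields $J(f) = J(f_{\alpha}) = J_{0}$ for every $f \in S$.

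It then remains only to identify $J_{0}$ with $J(S)$. By Theorem \ref{perf1} we have $J(S) = \overline{\bigcup_{f \in S} J(f)}$; since every $J(f)$ equals the closed set $J_{0}$, the union is just $J_{0}$ and taking closure changes nothing, so $J(S) = J_{0} = J(f)$ for all $f \in S$, as required.

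The step I expect to be the genuine obstacle is securing the commuting lemma in the finite-type setting, since the corresponding statement for general permutable transcendental entire functions is unavailable; everything else is bookkeeping. A secondary point that must be checked with care — and the reason the finite-type hypothesis on the generators propagates to all of $S$ — is the closure of the finite-type class under composition, without which the lemma could be invoked for the generators but not for a general element of the semigroup.
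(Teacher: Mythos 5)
Your proposal is correct and follows essentially the same route as the paper: the same commuting lemma for finite-type entire functions (Poon's Lemmas 5.1 and 5.2, stated here as Lemma \ref{per1}) applied first to pairs of generators and then to an arbitrary $f\in S$ against the generators, combined with $J(S)=\overline{\bigcup_{f\in S}J(f)}$ from Theorem \ref{perf1}. Your version is in fact slightly more careful than the paper's sketch, since you make explicit both the closure of the finite-type class under composition and the fact that abelianness of the generators forces every element of $S$ to commute with every generator.
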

Before proving this theorem \ref{abe1}, We recall the notion of finite type, bounded type functions and other related objects.  Recall that the set  $CV(f) = \{w\in \mathbb{C}: w = f(z)\;\ \text{such that}\;\ f^{\prime}(z) = 0\} $ is called the set of \textit{critical values}.  The set 
$AV(f)$ consisting of all  $w\in \mathbb{C}$ such that there exists a curve  $\Gamma:[0, \infty) \to \mathbb{C}$ so that $\Gamma(t)\to\infty$ and $f(\Gamma(t))\to w$ as $t\to\infty$ is called the set of \textit{asymptotic values} of $ f $ and the set
$SV(f) =  \overline{(CV(f)\cup AV(f))}$
is called the set of \textit{singular values} of $ f $.  
If $SV(f)$ has only finitely many elements, then $f$ is said to be of \textit{finite type}. If $SV(f)$ is a bounded set, then $f$ is said to be of \textit{bounded type}.                                                                                                                             
The sets
$$\mathscr{S} = \{f:  f\;\  \textrm{is of finite type}\} 
\;\;  \text{and}\; \;                                                                                                                                                                                                                                                                                                                                                                                                                                                                                                                                                                                                                                                                                                                                                                                                                                                                                                                                                                                                                                                                                                                                                                                                                                                                                                                                                                                                                                                                                                                                                                                                                                                                                                                                                                                                                                                                                                                                                                                                                                                                                                                                                                                                                                                                                                                                                                                                                                                                                                                                                                                                                                                                                                                                                                                                                                                                                                                                                                                                                                                                                                                     
\mathscr{B} = \{f: f\;\  \textrm{is of bounded type}\}
$$
are respectively called \textit{Speiser class} and \textit{Eremenko-Lyubich class}.  
Again, we recall the following result of K. K. Poon {\cite[Lemmas 5.1 and 5.2]{poo}} concerning commuting transcendental maps.
\begin{lem}\label{per1}
Let $ f $ and $ g $ be two transcendental entire functions of finite type. Then $ f \circ g $ is of finite type. Moreover, if $ f $ and $ g $ are permutable, then $ J(f) =J(g) $.
\end{lem}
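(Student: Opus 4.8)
The plan is to prove the two assertions in turn, invoking the finite-type hypothesis only where it is genuinely needed. For the first assertion I would start from the singular-value inclusion
\[ SV(f\circ g)\subset SV(f)\cup f(SV(g)). \]
To justify it, split $SV=\overline{CV\cup AV}$ into its two constituents. The chain rule $(f\circ g)'(z)=f'(g(z))\,g'(z)$ shows that a critical point of $f\circ g$ is either a critical point of $g$, contributing a value in $f(CV(g))$, or a point $z$ with $g(z)$ a critical point of $f$, contributing a value in $CV(f)$; hence $CV(f\circ g)\subset CV(f)\cup f(CV(g))$. For an asymptotic value $w$, realized by a path $\Gamma(t)\to\infty$ with $f(g(\Gamma(t)))\to w$, I would pass to a subsequence along which $g(\Gamma(t))$ either tends to $\infty$, forcing $w\in AV(f)$, or tends to a finite limit $a$, forcing $a\in AV(g)$ and $w=f(a)\in f(AV(g))$. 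Taking closures (finite sets have finite closures) gives the displayed inclusion, and since $SV(f)$, $SV(g)$ are finite and $f(SV(g))$ is the image of a finite set, $SV(f\circ g)$ is finite; thus $f\circ g\in\mathscr{S}$.

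For the second assertion the analogue of Lemma \ref{per} is unavailable, so I would construct the required invariance directly. The key claim is that $g(F(f))\subset F(f)$. To see it, fix $z_0\in F(f)$ and a disk $D\ni z_0$ with $\overline{D}\subset F(f)$; since $g$ is open, $g(D)$ is a neighborhood of $g(z_0)$. Permutability gives $f^{n}\circ g=g\circ f^{n}$ for every $n$, so for any sequence of iterates I extract, by normality on $D$, a subsequence $f^{n_k}\to\psi$ locally uniformly on $D$. When $\psi$ is finite, $f^{n_k}(g(z))=g(f^{n_k}(z))\to g(\psi(z))$ uniformly on $D$, which exhibits a subsequence of $\{f^{n}\}$ converging on the neighborhood $g(D)$ of $g(z_0)$; hence $g(z_0)\in F(f)$.

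The one case this leaves open is $\psi\equiv\infty$, where $g(f^{n_k}(z))$ need not converge because $g$ has an essential singularity at $\infty$; this is the main obstacle, and it is precisely where finite type is used. Since $f\in\mathscr{S}\subset\mathscr{B}$, one has $I(f)\subset J(f)$ (Eremenko and Lyubich), and a finite-type map has neither wandering domains nor Baker domains, so on no Fatou component can the iterates, or a subsequence of them, tend locally uniformly to $\infty$. Therefore every limit $\psi$ above is finite and the claim $g(F(f))\subset F(f)$ holds.

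Equivalently $g^{-1}(J(f))\subset J(f)$, so $J(f)$ is a closed, backward $g$-invariant set, and it is perfect, hence infinite, by Theorem \ref{perf1}. For any closed set $E$ with more than two points and $g^{-1}(E)\subset E$, the complement $\mathbb{C}\setminus E$ is forward $g$-invariant and $\{g^{n}\}$ omits the points of $E$ there, so Montel's theorem gives $\mathbb{C}\setminus E\subset F(g)$, i.e. $J(g)\subset E$; taking $E=J(f)$ yields $J(g)\subset J(f)$. The symmetric argument with $f$ and $g$ interchanged gives $J(f)\subset J(g)$, and therefore $J(f)=J(g)$.
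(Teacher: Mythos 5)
First, a point of context: the paper never proves this lemma --- it is recalled verbatim from K.~K.~Poon (Lemmas 5.1 and 5.2 of the cited paper), so your attempt is supplying a proof where the text itself offers none. Your proof of the second assertion is correct, and it is the standard route (essentially the classical Fatou argument for commuting maps, adapted to finite type): permutability plus finiteness of all limit functions on $F(f)$ gives $g(F(f))\subset F(f)$, and then Montel's theorem applied to the closed, infinite, backward $g$-invariant set $J(f)$ gives $J(g)\subset J(f)$, with symmetry finishing the proof. You use finite type exactly where it must be used (ruling out $\psi\equiv\infty$), though note that the step ``no wandering domains and no Baker domains, hence no subsequence of iterates tends locally uniformly to $\infty$ on any Fatou component'' silently invokes the classification of periodic Fatou components: one needs that on attracting basins, parabolic basins and Siegel disks (the only possibilities left for a map in $\mathscr{S}$) every limit function is finite.

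The genuine gap is in the first assertion, in the asymptotic-value case. The inclusion $SV(f\circ g)\subset SV(f)\cup f(SV(g))$ is true and standard, and your critical-value computation is fine, but your dichotomy --- ``pass to a subsequence along which $g(\Gamma(t))$ either tends to $\infty$, forcing $w\in AV(f)$, or tends to a finite limit $a$, forcing $a\in AV(g)$'' --- justifies neither branch. An asymptotic value is defined by convergence along a \emph{curve} tending to $\infty$, not along a sequence. From $\Gamma(t_{n_k})\to\infty$ and $g(\Gamma(t_{n_k}))\to a$ one cannot conclude $a\in AV(g)$: take $g(z)=e^{z}$ and $\Gamma$ the positive imaginary axis; every point of the unit circle is a sequential cluster value of $g$ along $\Gamma$, while $SV(g)=\{0\}$. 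Symmetrically, a subsequence of $g(\Gamma(t))$ tending to $\infty$ does not exhibit $w$ as an asymptotic value of $f$ unless the whole curve $g\circ\Gamma$ tends to $\infty$.

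The missing idea is to use the target of the proof as a hypothesis: assume $w\notin SV(f)$ (otherwise there is nothing to prove) and dichotomize on the curve $\gamma=g\circ\Gamma$ itself. If $\gamma(t)\to\infty$ as $t\to\infty$, then $w\in AV(f)$, contradiction. Otherwise, choose a disk $U\ni w$ disjoint from the finite set $SV(f)$; then $f^{-1}(U)$ is a disjoint union of domains, each mapped conformally onto $U$ by $f$. For large $t$ the (connected) tail of $\gamma$ lies in a single such component $V$, and with $\phi=(f|_{V})^{-1}$ one gets $\gamma(t)=\phi\bigl(f(\gamma(t))\bigr)\to\phi(w)=:a$. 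Now the \emph{whole} curve tail converges, so $a\in AV(g)$ genuinely, $\Gamma$ being a curve to $\infty$, and $w=f(a)\in f(AV(g))$. With this replacement (closures are harmless since all sets involved are finite), your first assertion is proved and the rest of your argument stands.
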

\begin{proof}[Proof of the theorem  \ref{abe1}]
Since semigroup $ S $ is abelian, then by above lemma \ref{per1}, $ J(f_{i}) = J(f_{j}) $ for all generators $f_ i $ and $ f_j $ with $ i \neq j $. Each $ f =f_{i_{1}}\circ f_{i_{2}} \circ \ldots f_{i_{m}} $ is of bounded type by the same lemma \ref{per1} and $ J(f) = J(f_{i}) $ for all $ i $. This fact together with the fact of theorem \ref{perf1}, we can conclude that $J(S) = J(f) $ for all $ f \in S $.. 
\end{proof}

We expect that the condition mentioned in the theorem \ref{abe1} will also be enough to hold $ I(S) = I(f) $  for all $ f \in S $.
\begin{theorem}\label{abet}
Let $ S $ is an abelian transcendental semigroup in which each generator is of finite type (or bounded type).  Then $ I(S) = I(f) $ for all $ f \in S $.
\end{theorem}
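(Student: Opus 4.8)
The plan is to prove the two inclusions separately. One of them, $I(S)\subseteq I(f)$ for every $f\in S$, is already furnished by Theorem \ref{1c}(3) and holds for any transcendental semigroup, abelian or not; so the whole content of the statement lies in the reverse inclusion $I(f)\subseteq I(S)$. Unravelling Definition \ref{2ab}, the semigroup $S$ is iteratively divergent at $z$ exactly when $g^{n}(z)\to\infty$ for every $g\in S$, so that $I(S)=\bigcap_{g\in S}I(g)$. Hence, for a fixed $f$, the inclusion $I(f)\subseteq I(S)$ is equivalent to $I(f)\subseteq I(g)$ for every $g\in S$, and combined with the trivial inclusion this forces all the escaping sets $I(g)$, $g\in S$, to coincide with $I(f)$. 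Thus the theorem reduces entirely to showing that permutable elements of $S$ share a common escaping set.

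First I would pass to the generators. Since $S$ is abelian any two elements commute, and by Lemma \ref{per1} (together with its class-$\mathscr{B}$ analogue) every element $f=f_{i_{1}}\circ\cdots\circ f_{i_{m}}$ is again of finite, respectively bounded, type; so it suffices to prove the escaping-set counterpart of the Julia-set Lemma \ref{per1}: \emph{if $f,g$ are permutable transcendental entire maps of bounded type, then $I(f)=I(g)$}. By symmetry it is enough to show $I(f)\subseteq I(g)$, i.e.\ that $f^{n}(z)\to\infty$ implies $g^{n}(z)\to\infty$. Here I would assemble three ingredients: that $J(f)=J(g)$ by Lemma \ref{per1}, whence $I(f),I(g)\subseteq J(f)=J(g)$ because a map of class $\mathscr{B}$ satisfies $I(h)\subseteq J(h)$ (Eremenko--Lyubich); that $g$ commutes with every iterate, $g\circ f^{n}=f^{n}\circ g$; and the bounded-type structure near infinity, namely that for $SV(g)\subseteq\{|w|<R\}$ the set $\{|g|>R'\}$ splits into unbounded logarithmic tracts on which $g$ is strongly expanding.

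The hard part will be precisely this transfer of escape from $f$ to $g$. The naive attempt fails because a transcendental entire $g$ need not send points of large modulus to points of large modulus (for instance $g=e^{z}$ stays small along the negative reals), so an $f$-escaping orbit $f^{n}(z)\to\infty$ does not formally feed into $g$. This is exactly where the bounded-type hypothesis is indispensable: one must argue that the commutation $g\circ f^{n}=f^{n}\circ g$ makes the tract structures of $f$ and $g$ compatible — so that $g$ permutes the tracts of $f$ and the $f$-orbit of an escaping point eventually lies in, and is trapped by, the expanding tracts of $g$ — and then deduce $g^{n}(z)\to\infty$ from the tract expansion estimates. Establishing this tract compatibility and trapping is the crux of the argument. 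Once the lemma $I(f)=I(g)$ is secured for every pair, the reduction of the first paragraph together with $I(S)=\bigcap_{g\in S}I(g)$ immediately yields $I(S)=I(f)$ for all $f\in S$.
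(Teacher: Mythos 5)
Your reduction is exactly the one the paper uses: since $I(S)=\bigcap_{g\in S}I(g)$ by Definition \ref{2ab}, the theorem follows once one knows that any two permutable transcendental entire functions of finite (or bounded) type have the same escaping set, and the abelian hypothesis together with Lemma \ref{per1} propagates this from the generators to every element of $S$ via the rearrangement $f=f_1^{t_1}\circ\cdots\circ f_n^{t_n}$. That skeleton matches the paper's proof, which isolates the pairwise statement as Lemma \ref{es21}.

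The gap is that you never actually prove that key lemma. You correctly diagnose why the naive argument fails (an $f$-escaping orbit does not formally feed into $g$), and you propose to repair it with logarithmic tracts, expansion estimates, and a ``tract compatibility and trapping'' argument --- but you explicitly leave that crux unestablished, so the proposal as written does not close. For comparison, the paper's proof of Lemma \ref{es21} takes a different and much softer route: for $h\in\mathscr{B}$, Eremenko--Lyubich gives $I(h)\subset J(h)$ and $J(h)=\overline{I(h)}$; combined with $J(f)=J(g)$ from Lemma \ref{per1} this yields $\overline{I(f)}=\overline{I(g)}$, and the paper then argues that the non-escaping points of $J(f)$ and of $J(g)$ coincide, which (since $I(h)=J(h)\setminus(J(h)\setminus I(h))$) would give $I(f)=I(g)$ with no tract analysis at all. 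Be aware, however, that the paper's decisive step there --- that $z\in J(g)$ forces $(g^{n}(z))$ to have a bounded subsequence --- is itself unjustified, since membership in the Julia set says nothing about boundedness of the orbit. So neither your sketch nor the paper's argument fully establishes the lemma; a complete proof still requires the missing transfer-of-escape argument, and your tract-based plan is a plausible but unexecuted route to it.
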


\begin{lem}\label{es21}
If $ f $ and $ g $ are permutable transcendental entire functions of finite type (or bounded type), then $ I(f) = I(g) $.
\end{lem}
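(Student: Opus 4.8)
The plan is to prove the two inclusions $I(f)\subseteq I(g)$ and $I(g)\subseteq I(f)$ separately; since the hypotheses are symmetric in $f$ and $g$, it suffices to establish $I(f)\subseteq I(g)$ and then interchange the roles of the two maps. Throughout I would lean on two features of the bounded type (equivalently, finite type) hypothesis. First, for $f\in\mathscr{B}$ one has $I(f)\subseteq J(f)$, and likewise $I(g)\subseteq J(g)$, so escaping points automatically live on the Julia set. Second, permutability together with the finite/bounded type condition yields $J(f)=J(g)$: this is exactly Lemma \ref{per1} in the finite type case, and the same conclusion is available in the bounded type case. Writing $J=J(f)=J(g)$, every point of $I(f)$ already lies in this common Julia set, which is the arena for the whole argument.

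Next I would reduce matters to the behaviour of a single orbit. Fix $z\in I(f)$, so that $w_n:=f^n(z)\to\infty$, and note that the commuting relation $f\circ g=g\circ f$ iterates to $f^{m}\circ g^{n}=g^{n}\circ f^{m}$ for all $m,n$. The goal is to show $g^{n}(z)\to\infty$. Suppose not; then some subsequence stays bounded, say $g^{n_k}(z)\to a$ with $a\in\mathbb{C}$. Applying $f^{m}$ and using commutativity gives, for each fixed $m$, the identity $g^{n_k}(f^{m}(z))=f^{m}(g^{n_k}(z))\to f^{m}(a)$ as $k\to\infty$. Thus along the subsequence $(n_k)$ the high iterates $g^{n_k}$ carry the $f$-escaping points $w_m=f^{m}(z)$, which tend to $\infty$, into the \emph{bounded} set $\{f^{m}(a):m\in\mathbb{N}\}$. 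The strategy is to show this is incompatible with the expanding behaviour of $g$ near infinity, thereby forcing $g^{n}(z)\to\infty$ and hence $z\in I(g)$.

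The main obstacle is precisely this last incompatibility, and it is here that the bounded type hypothesis does the essential work rather than mere commutativity. One cannot argue that a partner map sends escaping orbits to escaping orbits for a \emph{general} entire $g$: for instance $g=e^{z}$ maps points with real part tending to $-\infty$ to $0$, so an orbit escaping along such a direction would be collapsed. To rule this out I would pass to logarithmic coordinates in the style of Eremenko--Lyubich: choosing $R$ so large that $SV(f)\cup SV(g)\subseteq\{\,|w|<R\,\}$, both $f$ and $g$ are defined and uniformly expanding on the tracts over $\{\,|w|>R\,\}$, the $f$-escaping orbit $(w_m)$ eventually lies in these tracts, and the commuting relation forces the tract structures of $f$ and $g$ to be compatible on $J$. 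The expansion estimate then shows that iterates of $g$ cannot pull points that are already deep in the tracts back into a bounded set, contradicting $g^{n_k}(w_m)\to f^{m}(a)$ for the large values of $m$; the same expansion applied directly to the orbit of $z$ gives $g^{n}(z)\to\infty$. This establishes $z\in I(g)$, hence $I(f)\subseteq I(g)$, and the reverse inclusion $I(g)\subseteq I(f)$ follows verbatim by exchanging $f$ and $g$. I expect the alignment of the two tract structures and the quantitative expansion bound to be the genuinely delicate points; everything else is routine once $J(f)=J(g)$ and $I(\cdot)\subseteq J(\cdot)$ are in hand.
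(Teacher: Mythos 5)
Your proposal has a genuine gap at exactly the point where the whole difficulty of the lemma lives. The setup is fine: $I(\cdot)\subseteq J(\cdot)$ for class $\mathscr{B}$, $J(f)=J(g)$ from Lemma \ref{per1}, and the reduction via commutativity to the statement that $g^{n_k}(f^m(z))=f^m(g^{n_k}(z))\to f^m(a)$ for each fixed $m$. You even correctly identify why commutativity alone cannot finish the job (the $e^z$ collapse phenomenon). But the mechanism you propose to close the argument --- uniform expansion of $g$ on its logarithmic tracts over $\{|w|>R\}$ --- is not carried out, and as described it does not apply: a point of large modulus need not lie in a logarithmic tract of $g$ at all. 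For $g=e^z$ the tract over $\{|w|>R\}$ is the half-plane $\{\operatorname{Re}z>\log R\}$, which excludes most points of large modulus, so the fact that $f^m(z)\to\infty$ gives no information about whether these points sit where $g$ expands; moreover the Eremenko--Lyubich expansion estimate controls $g$ only for as long as an orbit \emph{remains} in $\{|w|>R\}$, and does not by itself forbid $g^{n_k}(w)$ from converging to a finite limit. The ``alignment of the tract structures of $f$ and $g$'' that would rescue this is precisely the missing lemma; you name it as delicate but supply no argument for it. A further slip: the set $\{f^m(a):m\in\mathbb{N}\}$ need not be bounded (if $a\in I(f)$ it is unbounded), so the phrase ``into the bounded set'' is not justified; what you actually have is boundedness of $(g^{n_k}(f^m(z)))_k$ for each \emph{fixed} $m$, which is weaker and harder to exploit.

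For comparison, the paper takes a much softer, non-orbitwise route: from $J(f)=J(g)$ and $J(\cdot)=\overline{I(\cdot)}$ it reduces the lemma to showing that the non-escaping parts $J(f)\setminus I(f)$ and $J(g)\setminus I(g)$ coincide, and then argues pointwise about bounded subsequences of orbits. Your direct approach, if the tract-alignment step could be proved, would be more informative, but in its present form the central contradiction is asserted rather than established, so the proof is incomplete.
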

\begin{proof}
As given in the statement of this lemma, Poon {\cite[Lemma 5.2]{poo}} showed that $ F(f) = F(g) $(Theorem \ref{abe1}). Eremenko and Lyubich \cite{ere1} proved that if transcendental function $ f\in \mathscr{B} $, then $ I(f)\subset J(f) $, and $ J(f) = \overline{I(f)}$.
For  any  function of finite type (or bounded type), we must have $ \overline{I(f)} = \overline{I(g)}$. This lemma will be proved if we show $ J(f) - I(f) = J(g) -I(g) $. Let $ z \in J(f) - I(f) $. Then $ z $ is a non-escaping point of $ J(f) $ and so the sequence $ (f^{n}) $  has a bounded subsequence at $ z $. $J(f) = J(g) $ implies that the sequence $ (g^{n}) $  has also a bounded subsequence at $ z $. So $ z \in J(g) -I(g) $. Therefore, $ J(f) - I(f) \subset J(g) -I(g) $. By similar fashion, we can show that $ J(g) -I(g) \subset J(f) - I(g) $. Hence, we got our claim. 

\end{proof}
\begin {proof}[Proof of the Theorem \ref{abet}]
Since semigroup $ S $ is abelian, so we have $ f_{i}\circ f_{j} = f_{j}\circ f_{i}$ for all generator $ f_{i} $ and $ f_{j} $ with $ i \neq j $. So by above lemma \ref{es21}, we have $ I(f_{i}) = I(f_{j}) $. Any $ f \in S $ can be written as $ f = f_{i_1}\circ f_{i_2}\circ f_{i_3}\circ \cdots\circ f_{i_m}$. By permutability of each $ f_{i} $, we can rearrange $ f_{i_{j}} $ and ultimately represented by 
$$
f = f_{1}^{t_{1}} \circ f_{2}^{t_{2}} \circ \ldots \circ f_{n}^{t_{n}}
$$
where each $ t_{k}\geq 0 $ is an integer for $ k = 1, 2, \ldots, n $. The lemms \ref{per1} can be applied repeatably to show each of $f_{1}^{t_{1}}, f_{2}^{t_{2}},\ldots, f_{n}^{t_{n}}  $ is of finite (or bounded) type and so $f = f_{1}^{t_{1}} \circ f_{2}^{t_{2}} \circ \ldots \circ f_{n}^{t_{n}}$ is itself finite  (or bounded) type. Since each $ f_{i} $  permutes with $ f $ and hence again by above lemma \ref{es21}, $ I(f_{i}) = I(f) $ for all $ f \in S $. Therefore, $ I(S) = I(f) $ for each $ f \in S $.
\end {proof}

\end{document}